\numberwithin{equation}{section}
\newtheorem{Theorem}{Theorem}[section]
\newtheorem{Corollary}[Theorem]{Corollary}
\newtheorem{Lemma}[Theorem]{Lemma}
\newtheorem{Proposition}[Theorem]{Proposition}
 { \theoremstyle{definition}
\newtheorem{Definition}[Theorem]{Definition}
\newtheorem{Remark}[Theorem]{Remark} }
\newcommand{\Presym}{\mathsf{Pre}\textrm{-}\mathsf{Sym}}
\newcommand{\hor}{\mathrm{hor}}
\renewcommand{\graph}{\mathrm{graph}}
\newcommand{\id}{\mathrm{id}}
\newcommand{\Lie}{\mathcal{L}}
\newcommand{\T}{\mathbb{T}}
\newcommand{\smooth}{\mathcal{C}}
\newcommand{\V}{\mathbb{V}}
\renewcommand{\t}{\mathfrak{t}}
\newcommand{\MC}{\mathsf{MC}}
\newcommand{\TT}{\ensuremath{\mathbb T}}
\renewcommand{\graph}{\mathrm{graph}}
\newcommand{\oo}{[\![}
\newcommand{\cc}{]\!]}
\newcommand{\lione}{$L_{\infty}[1]$-algebra }
\begin{document}

\allowdisplaybreaks

\newcommand{\arXivNumber}{1807.10148}

\renewcommand{\PaperNumber}{128}

\FirstPageHeading

\ShortArticleName{Deformations of Pre-Symplectic Structures: a Dirac Geometry Approach}

\ArticleName{Deformations of Pre-Symplectic Structures:\\ a Dirac Geometry Approach}

\Author{Florian SCH\"ATZ~$^\dag$ and Marco ZAMBON~$^\ddag$}

\AuthorNameForHeading{F.~Sch\"atz and M.~Zambon}

\Address{$^\dag$~University of Luxembourg, Mathematics Research Unit, Maison du Nombre 6,\\
\hphantom{$^\dag$}~avenue de la Fonte L-4364 Esch-sur-Alzette, Luxembourg}
\EmailD{\href{mailto:florian.schaetz@gmail.com}{florian.schaetz@gmail.com}}

\Address{$^\ddag$~KU Leuven, Department of Mathematics, Celestijnenlaan 200B box 2400,\\
\hphantom{$^\ddag$}~BE-3001 Leuven, Belgium}
\EmailD{\href{mailto:marco.zambon@kuleuven.be}{marco.zambon@kuleuven.be}}

\ArticleDates{Received September 24, 2018, in final form November 27, 2018; Published online December 06, 2018}

\Abstract{We explain the geometric origin of the $L_{\infty}$-algebra controlling deformations of pre-symplectic structures.}

\Keywords{pre-symplectic geometry; deformation theory; Dirac geometry}

\Classification{17B70; 53D17; 58H15}

\section{Introduction}

A pre-symplectic form is just a closed 2-form of constant rank. For instance, the restriction of a~symplectic form to a~coisotropic submanifold (such as the zero level set of a moment map) is pre-symplectic. Given a pre-symplectic from $\eta$ of rank $k$, we constructed in~\cite{SZPre} an algebraic structure that encodes the deformations of $\eta$, i.e., the 2-forms nearby~$\eta$ (in the $C^0$-sense) which are both closed and of constant rank $k$. As in many deformation problems, this algebraic structure is an $L_{\infty}$-algebra, which we call the \emph{Koszul $L_{\infty}$-algebra} of~$\eta$. Its construction~-- which is somewhat involved due to the simultaneous presence of the closedness and constant rank conditions -- relies on a certain $BV_{\infty}$-algebra structure on the differential forms and builds on the work of Fiorenza--Manetti~\cite{Fiorenza--Manetti}. The Koszul $L_{\infty}$-algebra has the property that its Maurer--Cartan elements are in bijection with the pre-symplectic deformations of $\eta$.

Given that pre-symplectic forms are geometric objects, it is natural to ask for a geometric derivation of the algebraic structure that governs their deformations (the Koszul $L_{\infty}$-algebra). The present note provides an answer to this question. The idea is the following: instead of restricting oneself to the realm of 2-forms, work in the larger class of almost Dirac structures, and consider deformations of
 \begin{gather*}\graph(\eta) := \{(v,\eta(v,\cdot)) \, | \, v \in TM \} \subset TM \oplus T^*M\end{gather*}
 within the Dirac structures satisfying a constant rank condition. This is explained in Section~\ref{subsection:Diracview}, which is the heart of this note.

 The first step in \cite{SZPre} is to provide a parametrization of the constant rank forms nearby $\eta$ in terms of (an open subset in) a vector space. This parametrization is obtained naturally by taking the point of view of Dirac linear algebra in Section~\ref{subsection: Dirac-geometric interpretation}.

 The second step in \cite{SZPre} was to show that the closedness condition translates into a Maurer--Cartan equation for a suitable $L_{\infty}$-algebra. In Section~\ref{subsection: integrability via Dirac geometry} we re-obtain this result, and further we improve slightly a result of~\cite{SZPre}, see our Corollary~\ref{corollary: MC for bivector fields}.

Combining these results, in Section~\ref{subsection: presymplintegr} we recover the fact that the $L_{\infty}$-algebra governing deformations of Dirac structures, in the case at hand and upon a suitable restriction, is the Koszul $L_{\infty}$-algebra.

The Koszul $L_{\infty}$-algebra depends on an auxiliary choice of a distribution transverse to $\ker(\eta)$. In the Dirac-geometric interpretation, this translates into a suitable choice of a complement of $\graph(\eta)$ in $TM\oplus T^*M$. One of the achievements of~\cite{GMS} is to establish a general framework to control the effects of changing the complement, exhibiting explicit canonical $L_{\infty}$-isomorphisms between the corresponding $L_{\infty}$-algebras. A consequence of this note and of~\cite{GMS} is that the Koszul $L_{\infty}$-algebra of $(M,\eta)$ is well-defined up to $L_{\infty}$-isomorphisms.

\section{Review: deformations of pre-symplectic structures}\label{section: pre-symplectic structures}

{We review the results on deformations of pre-symplectic structures obtained in the first three sections of~\cite{SZPre}.}

\subsection{Pre-symplectic structures}\label{subsection: pre-symplectic structures}

Fix a smooth manifold $M$.

\begin{Definition}\label{definition: pre-symplectic}A $2$-form $\eta$ on $M$ is called \emph{pre-symplectic} if
\begin{enumerate}\itemsep=0pt
\item[1)] $\eta$ is closed,
\item[2)] the vector bundle map $\eta^\sharp\colon TM \to T^*M, v\mapsto \iota_v\eta = \eta(v,\cdot)$ has constant rank.
\end{enumerate}

A \emph{pre-symplectic manifold} is a pair $(M,\eta)$ consisting of a manifold $M$ and a pre-symplectic structure $\eta$ on $M$. We denote the space of all pre-symplectic structures of rank~$k$ on~$M$ by~$\Presym^k(M)$.
\end{Definition}

A pre-symplectic manifold $(M,\eta)$ gives rise to a distribution
\begin{gather*}K:=\ker\big(\eta^\sharp\big).\end{gather*}
This distribution is involutive since $\eta$ is closed, hence $K$ is tangent to a~foliation of~$M$. Denote by $r\colon \Omega(M) \to \Gamma(\wedge K^*)$ the restriction map. We define the horizontal differential forms as the elements of
\begin{gather*}\Omega_\hor(M):=\ker(r).\end{gather*}
They form a subcomplex of the de Rham complex ${\Omega(M)}$, since the de Rham differential commutes with the pullback of differential forms. The subcomplex $\Omega_\hor(M)$ {is the multiplicative ideal of~$\Omega(M)$ generated by $\Gamma(K^\circ)$, where $K^\circ \subset T^*M$ denotes the annihilator of~$K$.

\subsection{A parametrization of constant rank 2-forms}\label{subsection: Dirac parametrization}

In this subsection we fix a finite-dimensional, real vector space $V$. Recall that a bivector $Z\in \wedge^2 V$ is encoded by the induced} linear map
\begin{gather*}
Z^\sharp\colon \ V^* \to V, \qquad \xi \mapsto \iota_\xi Z = Z(\xi,\cdot).\end{gather*}
Define \begin{gather*} \mathcal{I}_Z:=\big\{\beta \in \wedge^2 V^*\colon \mathrm{id}_V + Z^\sharp \beta^\sharp \text{ is invertible}\big\},\end{gather*}
an open neighborhood of the origin in $\wedge^2 V^*$. Let $F \colon \mathcal{I}_Z\to \wedge^2 V^*$ be the map determined by
\begin{gather}\label{eq:alphabeta}
(F (\beta))^\sharp = \beta^\sharp\big(\id + Z^{\sharp}\beta^\sharp\big)^{-1}.
\end{gather}
The map $F$ is non-linear and smooth. It is a diffeomorphism from $\mathcal{I}_{Z}$ to $\mathcal{I}_{-Z}$, which keeps the origin fixed.

Fix $\eta \in \wedge^2 V^*$ of rank $k$. We now use $F$ to construct submanifold charts for the space $\big({\wedge}^2V^*\big)_k$ of skew-symmetric bilinear forms on $V$ of rank $k$. Fix a subspace $G\subset V$ such that $K\oplus G=V$, where $K=\ker\big(\eta^\sharp\big)$. The restriction of~$\eta$ to~$G$ is a non-degenerate skew bilinear form, therefore there is a unique $Z \in \wedge^2 G \subset\wedge^2 V$ such that
\begin{gather*} Z^\sharp\colon \ G^*\to G, \qquad \xi \mapsto \iota_\xi Z=Z(\xi,\cdot)\end{gather*}
equals $-\big(\eta\vert_G^\sharp\big)^{-1}$.

\begin{Definition}\label{definition: Dirac exponential map}The \emph{Dirac exponential map} $\exp_{\eta}$ of $\eta$ (and for fixed $G$) is the mapping
\begin{gather*}{\exp_{\eta}}\colon \ \mathcal{I}_Z \to \wedge^2 V^*, \qquad \beta \mapsto \eta + F(\beta).\end{gather*}
\end{Definition}

Let $ r\colon {\wedge}^2 V^* \to {\wedge}^2 K^*$ be the restriction map; we have the natural identification $\ker(r) \cong \wedge^2 G^* \oplus (G^*\otimes K^*)$. By the following theorem \cite[Theorem~2.6]{SZPre}, the restriction of $\exp_{\eta}$ {to $\ker(r)$} is a submanifold chart for $\big({\wedge}^2 V^*\big)_k \subset \wedge^2 V^*$.

\begin{Theorem}[parametrizing constant rank forms]\label{theorem: almost Dirac structures}\quad
\begin{enumerate}\itemsep=0pt
\item[$(i)$] Let $\beta \in \mathcal{I}_Z$. Then $\exp_{\eta}(\beta)$ lies in $\big({\wedge}^2 V^*\big)_k$ if, and only if, $\beta$ lies in $\ker(r)=(K^*\otimes G^*)\oplus \wedge^2G^*$.

\item[$(ii)$] Let $\beta = (\mu,\sigma) \in \mathcal{I}_Z\cap \big((K^*\otimes G^*)\oplus \wedge^2G^*\big)$. Then $\exp_{\eta}(\beta)$ is the unique skew-symmetric bilinear form on $V$ with the following properties:
\begin{itemize}\itemsep=0pt
\item its restriction to $G$ equals $(\eta+F(\sigma))|_{\wedge^2G}$;
\item its kernel is the graph of the map $Z^\sharp \mu^\sharp= -\big(\eta\vert_G^{\sharp}\big)^{-1}\mu^\sharp\colon K\to G$.
\end{itemize}

\item[$(iii)$]The Dirac exponential map $\exp_{\eta}\colon \mathcal{I}_Z \to \wedge^2 V^*$ restricts to a~diffeomorphism
\begin{gather*}\mathcal{I}_Z \cap \big((K^*\otimes G^*)\oplus \wedge^2G^*\big) \stackrel{\cong}{\longrightarrow}
\big\{\eta' \in \big({\wedge}^2 V^*\big)_k\,|\, \ker\big({(\eta')^{\sharp}}\big) \text{ is transverse to $G$}\big\}\end{gather*}
onto an open neighborhood of $\eta$ in $\big({\wedge}^2 V^*\big)_k$.
\end{enumerate}
\end{Theorem}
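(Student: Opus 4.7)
The plan is to reformulate the theorem through Dirac linear algebra in $V \oplus V^*$ endowed with its canonical split-signature pairing $\langle(v_1,\xi_1),(v_2,\xi_2)\rangle := \xi_1(v_2)+\xi_2(v_1)$. Consider the pairing-preserving automorphism $\Psi := \tau_\eta \circ \rho_Z$, where $\tau_\eta(v,\xi) := (v,\xi+\iota_v\eta)$ is the $\eta$-gauge and $\rho_Z(v,\xi) := (v+Z^\sharp\xi,\xi)$ is the $Z$-shear. Write $P_K, P_G$ for the projections $V \to V$ onto the two summands of $V = K \oplus G$, and $P_{K^*}, P_{G^*}$ for the analogous projections on $V^* = K^* \oplus G^*$. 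The identities $Z^\sharp\eta^\sharp = -P_G$ and $\eta^\sharp Z^\sharp = -P_{G^*}$, both immediate from $Z^\sharp|_{G^*} = -(\eta|_G^\sharp)^{-1}$, yield after a short unwinding $\Psi(\graph(\beta)) = \{(v+Z^\sharp\beta^\sharp v,\, \eta^\sharp v + P_{K^*}\beta^\sharp v) : v \in V\}$. When $\beta \in \mathcal{I}_Z$ the map $v \mapsto v + Z^\sharp\beta^\sharp v$ is invertible, so this Lagrangian is a graph, and matching against~\eqref{eq:alphabeta} identifies it as $\graph(\exp_\eta(\beta))$.

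Given this realization $L_\beta := \Psi(\graph(\beta)) = \graph(\exp_\eta(\beta))$, parts (i) and (ii) reduce to intersection calculations. The kernel of $\exp_\eta(\beta)^\sharp$ equals the $V$-projection of $L_\beta \cap (V \oplus 0) = \Psi(\graph(\beta) \cap \Psi^{-1}(V \oplus 0))$, and a direct calculation gives $\Psi^{-1}(V \oplus 0) = K \oplus G^*$. Decomposing $\beta = \rho + \mu + \sigma$ along $\wedge^2 V^* = \wedge^2 K^* \oplus (K^* \otimes G^*) \oplus \wedge^2 G^*$, and using that $\mu$ and $\sigma$ vanish on $\wedge^2 K$, the condition $(v,\iota_v\beta) \in K \oplus G^*$ reduces to $v \in K$ and $\iota_v\rho|_K = 0$. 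Hence $\dim\ker\exp_\eta(\beta)^\sharp = \dim\ker(\rho^\sharp|_K)$, which equals $\dim K = \dim V - k$ if and only if $\rho = 0$, i.e., $\beta \in \ker(r)$; this is (i).

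For (ii), when $\rho = 0$ and $v \in K$ one has $\iota_v\beta = \iota_v\mu = \mu^\sharp v \in G^*$, and $\Psi(v,\mu^\sharp v) = (v + Z^\sharp\mu^\sharp v,\,0)$, so $\ker\exp_\eta(\beta)^\sharp$ is precisely the graph of $Z^\sharp\mu^\sharp\colon K \to G$. The restriction-to-$G$ identity follows from a block-matrix computation: for $\beta = \mu + \sigma$, the operator $Z^\sharp\beta^\sharp$ has block form $\bigl(\begin{smallmatrix}0 & 0\\A & B\end{smallmatrix}\bigr)$ relative to $V = K \oplus G$, with $A := Z^\sharp\mu^\sharp|_K$ and $B := Z^\sharp\sigma^\sharp|_G$; substituting into $F(\beta)^\sharp = \beta^\sharp(\id+Z^\sharp\beta^\sharp)^{-1}$ and restricting to $G$ collapses to $F(\sigma)^\sharp|_G$.

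For (iii), the map $\beta \mapsto \graph(\beta)$ is a smooth embedding of $\wedge^2 V^*$ onto the Lagrangians transverse to $\{0\} \oplus V^*$, and composing with the fixed automorphism $\Psi$ yields a diffeomorphism onto the Lagrangians transverse to $\Psi(\{0\} \oplus V^*) = G \oplus K^*$. Restricting to $\mathcal{I}_Z \cap \ker(r)$ selects the Lagrangians that are graphs of rank-$k$ forms. For surjectivity onto the target set, observe that if $\ker(\eta')^\sharp$ is transverse to $G$ then a dimension count forces $\eta'|_G^\sharp\colon G \to G^*$ to be injective, which is equivalent to $\graph(\eta')$ being transverse to $G \oplus K^*$, hence in the image of $\Psi \circ \graph$. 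The main bookkeeping hurdle is keeping straight the identification $V^* = K^* \oplus G^*$ together with the several projections it entails; once the two key identities in the first paragraph are in hand, every remaining verification is a routine block manipulation.
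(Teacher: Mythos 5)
Your proof is correct and follows essentially the same route as the paper's Dirac-geometric argument: your $\Psi=\tau_\eta\circ\rho_Z$ is exactly the composite $\t_\eta\circ\t_Z$ used in Lemma~\ref{lemma:niceeq} and Proposition~\ref{prop:rankgood}, and the identification $\Psi^{-1}(V\oplus 0)=K\oplus G^*$ reduces (i)--(iii) to the same intersection computations. You additionally carry out the block-matrix verification that $\exp_\eta(\beta)|_{\wedge^2 G}=(\eta+F(\sigma))|_{\wedge^2 G}$, which the paper's alternative proof of (ii) explicitly omits.
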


\begin{Remark}\label{rem:parametrize}By the above linear algebra construction, given a pre-symplectic manifold $(M,\eta)$, choosing a subbundle $G$ complementary to $K=\ker\big(\eta^\sharp\big)$, one obtains a map
\begin{gather}\label{eq:expetavb}
\exp_{\eta}\colon \ {\mathcal{I}_Z} \cap \big((K^*\otimes G^*)\oplus \big({\wedge}^2 G^*\big)\big) \to \wedge^2 T^*M.
\end{gather}
It is a not a vector bundle morphism but just a smooth fiberwise map. It maps the zero section to $\eta$, and its image is an open neighborhood of $\eta$ in the space of $2$-forms having the same rank as~$\eta$. The map $\exp_{\eta}$ allows to parametrize deformations of~$\eta$ inside $\Presym^k(M)$ by means of sections $(\mu, \sigma)\in \Gamma(K^*\otimes G^*)\oplus \Gamma\big({\wedge}^2 G^*\big) \cong \Omega_\hor^2(M)$ which are sufficiently small in the $C^0$-sense and for which the 2-form $(\exp_{\eta})(\mu,\sigma)$ is a closed.
\end{Remark}

\subsection[An $L_\infty$-algebra associated to a bivector field]{An $\boldsymbol{L_\infty}$-algebra associated to a bivector field}\label{subsection: Koszul for bivectors}

In this subsection we canonically associate an $L_\infty$-algebra to any bivector field~$Z$ on a manifold~$M$.

\begin{Definition}\label{definition: Koszul bracket of bivector field}Let $Z$ be a bivector field on $M$. The {\em Koszul bracket} associated to $Z$ is the operation
\begin{gather*}
 [\cdot,\cdot]_Z\colon \ \Omega^r(M) \times \Omega^s(M) \to \Omega^{r+s-1}(M),\\
 [\alpha,\beta]_Z:=(-1)^{|\alpha|+1} \big (\Lie_Z(\alpha\wedge \beta)-\Lie_Z(\alpha)\wedge \beta - (-1)^{|\alpha|}\alpha \wedge \Lie_Z(\beta)\big).
 \end{gather*}
\end{Definition}

Here $\Lie_Z = \iota_Z \circ d - d \circ \iota_Z$, where $\iota_Z$ denotes contraction with~$Z$ and~$d$ is the de Rham differential. On $1$-forms $\alpha$ and $\beta$, the Koszul bracket reads $[\alpha,\beta]_Z=\Lie_{Z^{\sharp}\alpha} \beta -\Lie_{Z^{\sharp}\beta} \alpha-d\langle Z, \alpha\wedge \beta\rangle$.

In general the Koszul bracket of $Z$ does not satisfy the graded Jacobi identity (it does only when $Z$ is a Poisson bivector-field). We will see in Proposition~\ref{proposition: extended Koszul} that nevertheless there is a~well-behaved algebraic structure associated to~$Z$. To this aim, recall that a differential form $\alpha \in \Omega^r(M)$ induces by contraction a linear map
\begin{gather*}\alpha^\sharp\colon \ TM \to \wedge^{r-1}T^*M, \qquad v \mapsto \iota_v\alpha,\end{gather*}
and, following \cite[Section~2.3]{FZgeo}, we extend this definition to a collection of forms $\alpha_1,\dots,\alpha_n$ by setting
\begin{align*}
 \alpha_1^\sharp \wedge \cdots \wedge \alpha_n^\sharp\colon \ \wedge^n TM &\to \wedge^{|\alpha_1|+\cdots +|\alpha_n|-n}T^*M, \\
 v_1\wedge \cdots \wedge v_n & \mapsto \sum_{\sigma \in S_n}(-1)^{|\sigma|}\alpha_1^\sharp(v_{\sigma(1)})\wedge \cdots \wedge \alpha_n^\sharp(v_{\sigma(n)}).
\end{align*}

\begin{Definition}\label{definition: trinary bracket}We define the trinary bracket $[\cdot,\cdot,\cdot]_Z\colon \Omega^r(M)\times \Omega^s(M)\times \Omega^k(M)\!\to\! \Omega^{r+s+k-3}(M)$ associated to the bivector field~$Z$
to be
\begin{gather*}[\alpha,\beta,\gamma]_Z:= \big(\alpha^\sharp \wedge \beta^\sharp \wedge \gamma^\sharp\big)\big(\tfrac{1}{2}[Z,Z]\big).\end{gather*}
\end{Definition}

These brackets endow $\Omega(M)[2]$ with an $L_\infty[1]$-algebra structure, extending the results of Fiorenza and Manetti \cite{Fiorenza--ManettiForm}. The following is \cite[Proposition~3.5]{SZPre}:
\begin{Proposition}[the {$L_\infty[1]$}-algebra {$\Omega(M)[2]$}] \label{proposition: extended Koszul} Let $Z$ be a bivector field on $M$. The multilinear maps $\lambda_1$, $\lambda_2$, $\lambda_3$ on the graded vector space $\Omega(M)[2]$ given by
\begin{enumerate}\itemsep=0pt
\item[$1)$] $\lambda_1$ the de~Rham differential $d$,
\item[$2)$] $\lambda_2(\alpha[2] \odot \beta[2])=-\big(\Lie_Z(\alpha\wedge \beta) - \Lie_Z(\alpha)\wedge \beta - (-1)^{\vert \alpha \vert}\alpha \wedge \Lie_Z(\beta)\big)[2] = (-1)^{|\alpha|} ([\alpha,\beta]_Z)[2]$, and
\item[$3)$] $\lambda_3(\alpha[2]\odot \beta[2]\odot \gamma[2])=(-1)^{|\beta|+1}\big(\alpha^\sharp\wedge \beta^\sharp \wedge \gamma^\sharp\big(\frac{1}{2}[Z,Z]\big)\big)[2]$,
\end{enumerate}
define the structure of an $L_\infty[1]$-algebra on $\Omega(M)[2]$.
\end{Proposition}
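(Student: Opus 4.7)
The plan is to verify the $L_\infty[1]$-relations arity by arity, noting that for $n\geq 5$ they are automatic since $\lambda_k=0$ for $k\geq 4$. The arity $n=1$ relation is just $d^2=0$. The arity $n=2$ relation asserts that $d$ is a graded derivation of $\lambda_2$; using the defining formula for the Koszul bracket and the identity $[d,\Lie_Z]=0$ (which follows from $\Lie_Z=[d,\iota_Z]$ together with the graded Jacobi identity for commutators of endomorphisms of $\Omega(M)$), this reduces to the fact that $d$ is a derivation of $\wedge$.

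The heart of the argument is the arity $n=3$ relation, a homotopy-Jacobi identity stating that the Jacobiator of $\lambda_2$ equals a $d$-coboundary involving $\lambda_3$. The key geometric input is the Cartan-type identity $\Lie_Z\circ\Lie_Z=\tfrac{1}{2}\Lie_{[Z,Z]}$, where $[Z,Z]$ is the Schouten--Nijenhuis bracket of $Z$ with itself. Since $\Lie_{[Z,Z]}=[d,\iota_{[Z,Z]}]$, the square of $\Lie_Z$ is $d$-exact with a primitive given by contraction with $\tfrac{1}{2}[Z,Z]$, which, once expanded through multi-contractions, yields exactly $\lambda_3$. The technical content is to unpack how the failure of $\iota_Z$ to be a derivation of $\wedge$ propagates through iterated Koszul brackets; I would organize this via the viewpoint that $\iota_Z$ is a second-order differential operator on the graded commutative algebra $(\Omega(M),\wedge)$, so that the brackets $\lambda_2$ and $\lambda_3$ arise canonically from the Koszul hierarchy of an order-$2$ operator. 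The Fiorenza--Manetti result covers the case $[Z,Z]=0$, and the extension amounts to introducing $\lambda_3$ and checking that it exactly cancels the Jacobiator.

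The arity $n=4$ relation is a coherence condition which, after (anti)symmetrization, takes the schematic form $\lambda_2\circ\lambda_3+\lambda_3\circ\lambda_2=0$. The geometric ingredient here is the Schouten--Nijenhuis Jacobi identity $[Z,[Z,Z]]=0$, which holds for any bivector field $Z$. Translated through the Cartan calculus, this identity says that $\iota_{[Z,Z]}$ is compatible (in the appropriate derived sense) with $\Lie_Z$, yielding the required cocycle condition on $\lambda_3$.

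I expect the arity $n=3$ identity to be the main obstacle: matching the combinatorial expansion of the Jacobiator of the Koszul bracket against the explicit multi-contraction formula for $\lambda_3$ requires a careful bookkeeping of signs and symmetrizations, and it is here that the degree shift by $[2]$ plays its role, normalizing the signs so that the $L_\infty[1]$-relations take their standard (symmetric) form. Once this verification is in place, the $n=4$ relation follows by applying the same Cartan-calculus dictionary to $[Z,[Z,Z]]=0$, completing the proof.
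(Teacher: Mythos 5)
Your strategy --- a direct, arity-by-arity verification of the $L_\infty[1]$-relations via Cartan calculus, organized around the fact that $\iota_Z$ is a second-order operator and that $\lambda_2,\lambda_3$ arise from its Koszul hierarchy --- is genuinely different from the route taken here. The paper obtains Proposition~\ref{proposition: extended Koszul} as a special case of a general Dirac-geometric statement (Proposition~\ref{lem:2.6first}) about a Dirac structure $L$ with a complementary almost Dirac structure $R$ in a Courant algebroid: one describes $\TT M$ via Roytenberg's graded-geometric picture, applies Voronov's higher derived brackets to the cubic Hamiltonian $\Theta$, and then specializes to $L=TM$, $R=\graph(Z)$, identifying $d_L$ with $d$, the bracket on $R\cong T^*M$ with the Koszul bracket, and $\psi$ with $-\tfrac12[Z,Z]$, finally conjugating by $-\id$. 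Your route is essentially that of the original reference \cite{SZPre} (extending Fiorenza--Manetti \cite{Fiorenza--ManettiForm}); it stays within classical Cartan calculus but requires the sign and symmetrization bookkeeping you defer, whereas the derived-brackets route buys \emph{all} generalized Jacobi identities at once from $[\Theta,\Theta]=0$ together with the subalgebra/abelian hypotheses of Voronov's theorem.

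There is, however, one concrete error in your plan: the claim that the relations in arity $n\ge 5$ are automatic because $\lambda_k=0$ for $k\ge 4$. The arity-$n$ relation sums compositions $\lambda_j\circ\lambda_i$ over $i+j=n+1$, and for $n=5$ the pair $(i,j)=(3,3)$ survives: the relation is the vanishing of the sum over $(3,2)$-unshuffles, with Koszul signs, of $\lambda_3\big(\lambda_3(x_{\sigma(1)},x_{\sigma(2)},x_{\sigma(3)})\odot x_{\sigma(4)}\odot x_{\sigma(5)}\big)$. This does not vanish term by term; it is a genuine identity about the double multi-contraction with two copies of $\tfrac12[Z,Z]$ and requires its own cancellation argument (in the derived-brackets picture it follows from $\Theta$ being cubic together with $[\Theta,\Theta]=0$). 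Only for $n\ge 6$ are the relations vacuous. Apart from this, you have identified the right ingredients ($[d,\Lie_Z]=0$, $\Lie_Z^2=\tfrac12\Lie_{[Z,Z]}$ with $\Lie_{[Z,Z]}$ a commutator with $d$, and $[Z,[Z,Z]]=0$ for arity~4), but the arity-3 and arity-4 checks are only reduced to these identities rather than carried out, so as written the proposal is a viable plan with one false step rather than a complete proof.
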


We now explain the geometric relevance of the $L_{\infty}[1]$-algebra $(\Omega(M)[2],\lambda_1,\lambda_2,\lambda_3)$. As for any $L_\infty[1]$-algebra, it comes with distinguished elements:

\begin{Definition}\label{definition: Maurer--Cartan equation} An element $\beta \in \Omega^2(M)$ is a {\em Maurer--Cartan element} of $(\Omega(M)[2],\lambda_1,\lambda_2,\lambda_3)$ if it satisfies the {\em Maurer--Cartan equation}
\begin{gather*} d(\beta[2]) + \tfrac{1}{2}\lambda_2(\beta[2]\odot \beta[2]) + \tfrac{1}{6}\lambda_3(\beta[2]\odot \beta[2]\odot \beta[2])=0.\end{gather*}
\end{Definition}

Recall that at the beginning of Section~\ref{subsection: Dirac parametrization} we defined an open subset $\mathcal{I}_Z \subset {\wedge^2 T^*M}$ and a~map $F \colon \mathcal{I}_Z\to \wedge^2 T^*M$. The following is \cite[Corollary~3.9]{SZPre}.
\begin{Corollary}[Maurer--Cartan elements of {$\Omega(M)[2]$}]\label{corollary: MC for bivector fields} There is an open subset $\mathcal{U}\subset \mathcal{I}_Z$, which contains the zero section of $\wedge^2 T^*M$, such that a $2$-form $\beta \in \Gamma(\mathcal{U})$ is a Maurer--Cartan element of $(\Omega(M)[2],\lambda_1,\lambda_2,\lambda_3)$ if, and only if, the $2$-form $F(\beta)$ is closed.
\end{Corollary}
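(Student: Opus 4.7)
The plan is to reformulate the closedness of $F(\beta)$ as a Dirac-theoretic involutivity condition for the graph of $\beta$, and then unpack this condition to obtain the Maurer--Cartan equation of Proposition~\ref{proposition: extended Koszul}.

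First, I would establish a clean Dirac-geometric meaning of the map $F$. Introduce the \emph{bivector transform} $\tau_Z\colon TM \oplus T^*M \to TM \oplus T^*M$ defined by $\tau_Z(v,\xi) := (v + Z^\sharp\xi, \xi)$. The defining relation $(F(\beta))^\sharp = \beta^\sharp(\id + Z^\sharp\beta^\sharp)^{-1}$ then yields
\begin{gather*}
\graph(F(\beta)) = \tau_Z\big(\graph(\beta)\big)\qquad\text{for every }\beta \in \mathcal{I}_Z.
\end{gather*}
Indeed, $\tau_Z(w, \beta^\sharp w) = ((\id + Z^\sharp\beta^\sharp)w, \beta^\sharp w)$, and the substitution $v = (\id + Z^\sharp\beta^\sharp) w$, which is admissible precisely on $\mathcal{I}_Z$, identifies the right-hand side with the graph of $F(\beta)$.

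Next, I would invoke the standard fact that the graph of a $2$-form $\alpha$ is involutive for the Courant bracket on $TM \oplus T^*M$ if and only if $d\alpha = 0$. Combined with the previous step, closedness of $F(\beta)$ is equivalent to Courant-involutivity of $\tau_Z(\graph(\beta))$, hence to involutivity of $\graph(\beta)$ with respect to the anchor and bracket pulled back along~$\tau_Z$. Because $\tau_Z$ is not an automorphism of the standard Courant algebroid, this pullback is a genuine modification depending polynomially on $Z$: the modified anchor of $(v,\xi)$ acquires the term $Z^\sharp\xi$, the quadratic correction to the bracket is governed by the Koszul bracket of Definition~\ref{definition: Koszul bracket of bivector field}, and the cubic correction is governed by the Schouten square $[Z,Z]$, matching Definition~\ref{definition: trinary bracket}.

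The final step is to parametrize sections of $\graph(\beta)$ by vector fields via $X \mapsto (X, \iota_X\beta)$, compute the pulled-back bracket explicitly, and observe that its involutivity defect, contracted against a third vector field, reproduces precisely
\begin{gather*}
d\beta + \tfrac{1}{2}\lambda_2(\beta[2]\odot\beta[2]) + \tfrac{1}{6}\lambda_3(\beta[2]\odot\beta[2]\odot\beta[2]).
\end{gather*}
The set $\mathcal{U}$ is then any open neighborhood of the zero section inside $\mathcal{I}_Z$ on which this fiberwise computation is valid. The main obstacle will be the cubic term: one must check carefully that the $[Z,Z]$-contribution to the pulled-back Courant bracket matches $\lambda_3$ exactly, with no extra correction. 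This is ultimately the reason why the $L_\infty[1]$-structure on $\Omega(M)[2]$ carries a non-trivial ternary bracket as soon as $Z$ fails to be Poisson.
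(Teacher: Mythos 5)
Your proposal is correct and follows essentially the same Dirac-geometric route as the paper: your bivector transform $\tau_Z$ is the paper's orthogonal transformation $\t_Z$, your identity $\graph(F(\beta))=\tau_Z(\graph(\beta))$ is exactly equation~\eqref{eq:phizbeta} from Lemma~\ref{lemma: almost Dirac structures}, and the concluding step (the graph of a $2$-form is a Dirac structure iff the form is closed) is identical. The one genuine point of divergence is the middle step: the paper does not pull back the Dorfman bracket along $\t_Z$ and recompute; it instead invokes Proposition~\ref{proposition: MC for bivector fields} (the second half of \cite[Lemma~2.6]{FZgeo}, extending Liu--Weinstein--Xu), which states for general transverse pairs $L$, $R$ that $\sigma$ is Maurer--Cartan for the $L_\infty[1]$-structure of Proposition~\ref{lem:2.6first} iff its graph is Dirac; specializing to $L=TM$, $R=\graph(Z)$ gives at once that $\beta$ is Maurer--Cartan iff $\Phi_Z(\beta)=\tau_Z(\graph(\beta))$ is Dirac, for \emph{every} $2$-form $\beta$. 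What you propose to verify by hand --- that the involutivity defect of $\graph(\beta)$ for the $\tau_Z$-twisted bracket, contracted with a third vector field, equals $d\beta+\tfrac{1}{2}\lambda_2(\beta\odot\beta)+\tfrac{1}{6}\lambda_3(\beta\odot\beta\odot\beta)$ --- is precisely the content of that cited result, and it is where all the work lies; you rightly flag the matching of the ternary term (coefficients and signs included) as the delicate point, but your outline defers that check rather than performing it. Two further remarks: first, your argument uses $\beta\in\mathcal{I}_Z$ only to identify $\tau_Z(\graph(\beta))$ with the graph of the $2$-form $F(\beta)$, so you could, and should, conclude the sharper statement that $\mathcal{U}$ may be taken to be all of $\mathcal{I}_Z$ --- this is exactly the improvement over \cite{SZPre} that the paper emphasizes, and your more cautious phrasing (``any open neighborhood \dots on which this fiberwise computation is valid'') gives it away unnecessarily. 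Second, your closing remark should be slightly amended: the ternary bracket $\lambda_3$ is nonzero as soon as $[Z,Z]\neq 0$, i.e., the $L_\infty[1]$-structure is a genuine one (not a dgla) exactly when $Z$ fails to be Poisson, which is consistent with your statement but worth phrasing as a property of the brackets rather than of the pulled-back Courant structure.
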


In Section \ref{subsection: integrability via Dirac geometry} we will show that for the open subset $\mathcal{U}$ one can choose the whole of~$\mathcal{I}_Z$.

\subsection[The Koszul $L_\infty$-algebra of a pre-symplectic manifold]{The Koszul $\boldsymbol{L_\infty}$-algebra of a pre-symplectic manifold}\label{subsection: Koszul for pre-symplectic}

Let again $\eta$ be a pre-symplectic structure on a manifold~$M$. Fix a subbundle $G\subset TM$ which is complementary to the kernel~$K$ of~$\eta$. Consider the bivector field $Z$ satisfying $Z^\sharp = -\big(\eta\vert_G^\sharp\big)^{-1}$. The following is \cite[Theorem~3.17]{SZPre}.

\begin{Theorem}[the Koszul {$L_\infty[1]$}-algebra]\label{theorem: construction - Koszul L-infty} The $L_{\infty}[1]$-algebra structure on $\Omega(M)[2]$ associated to the bivector field $Z$, see Proposition~{\rm \ref{proposition: extended Koszul}}, maps $\Omega_\hor(M)[2]$ to itself. The subcomplex $\Omega_\hor(M)[2] \subset \Omega(M)[2]$ therefore inherits the structure of an $L_\infty[1]$-algebra, which we call the Koszul $L_\infty[1]$-algebra of~$(M,\eta)$.
\end{Theorem}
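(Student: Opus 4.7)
The plan is to check, one bracket at a time, that $\lambda_1=d$, $\lambda_2$, and $\lambda_3$ all send $\Omega_\hor(M)$ to itself. For $\lambda_1$ this is already recorded in Section~\ref{subsection: pre-symplectic structures}: $\Omega_\hor(M)$ is a subcomplex of the de Rham complex because $K$ is involutive. The common strategy for the other two rests on two structural facts. First, $\Omega_\hor(M)$ is the $\wedge$-ideal in $\Omega(M)$ generated by $\Gamma(K^\circ)$. Second, $\lambda_2$ and $\lambda_3$ are graded multi-derivations of the wedge product in each slot. For $\lambda_2$, this biderivation property is a general consequence of $\Lie_Z=\iota_Z d-d\iota_Z$ being a second-order differential operator on $(\Omega(M),\wedge)$, so that the Koszul bracket $[\cdot,\cdot]_Z$ is its ``derivation defect''. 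For $\lambda_3$, the triderivation property can be read off directly from Definition~\ref{definition: trinary bracket} using the Leibniz identity $(\alpha_1\wedge\alpha_2)^\sharp(v)=\alpha_1^\sharp(v)\wedge\alpha_2+(-1)^{|\alpha_1|}\alpha_1\wedge\alpha_2^\sharp(v)$. These two facts together reduce the problem to checking the brackets on generators of the ideal, i.e., on sections of $K^\circ$.

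For $\lambda_2$, the remaining task is to show $[\xi_1,\xi_2]_Z\in\Gamma(K^\circ)$ whenever $\xi_1,\xi_2\in\Gamma(K^\circ)$. Using the $1$-form formula $[\xi_1,\xi_2]_Z=\Lie_{Z^\sharp\xi_1}\xi_2-\Lie_{Z^\sharp\xi_2}\xi_1-d\langle Z,\xi_1\wedge\xi_2\rangle$, evaluating on a section $V\in\Gamma(K)$, and using the identity $\eta(Z^\sharp\xi,W)=-\xi(W)$ (valid for $\xi\in\Gamma(K^\circ)$ and any $W\in TM$, since $Z^\sharp=-(\eta|_G^\sharp)^{-1}$ and $K=\ker\eta^\sharp$), the expression for $[\xi_1,\xi_2]_Z(V)$ becomes the Cartan expansion of $d\eta(V,Z^\sharp\xi_1,Z^\sharp\xi_2)$. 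Since $d\eta=0$, this gives $[\xi_1,\xi_2]_Z(V)=0$, as required.

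For $\lambda_3$, the remaining task is to show $\lambda_3(\xi_1,\xi_2,\xi_3)=0$ for $\xi_i\in\Gamma(K^\circ)$; this quantity is a function, so it lies in $\Omega_\hor^0(M)=0$ iff it vanishes identically. By Definition~\ref{definition: trinary bracket} it equals, up to sign, $\tfrac{1}{2}[Z,Z](\xi_1,\xi_2,\xi_3)$, i.e., the evaluation of the $\wedge^3 G$-component of $[Z,Z]$ on $\xi_1\wedge\xi_2\wedge\xi_3$. The vanishing of this component again follows from $d\eta=0$: morally, $Z$ is the horizontal lift along $G$ of the Poisson structure inverse to the symplectic form induced by $\eta$ on the local leaf space of $K$, and the $\wedge^3 G$-component of $[Z,Z]$ detects the failure of that quotient structure to be Poisson. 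I expect this to be the main obstacle of the proof: unlike for $\lambda_2$, the verification is a more involved local computation, though one that still reduces ultimately to applying $d\eta=0$ to triples of vectors of the form $Z^\sharp\xi_i$.
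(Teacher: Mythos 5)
Your proposal is correct and follows essentially the same route as the paper: the unnamed Proposition in Section~\ref{subsection: presymplintegr} reduces the statement, via the fact that the multibrackets are derivations of the wedge product in each entry, to checking them on generators $\xi_i\in\Gamma(K^\circ)$, which is exactly your strategy; the paper phrases the resulting conditions through the Dorfman bracket of sections of $K^\circ\subset\graph(Z)$, but (as the paper itself remarks) this amounts to the same computations with $[\cdot,\cdot]_Z$ and $[Z,Z]$ that you perform. The $\lambda_3$ verification you defer does close as you predict, since for $\xi_i\in\Gamma(K^\circ)$ one has $\tfrac12[Z,Z](\xi_1,\xi_2,\xi_3)=-d\eta\big(Z^\sharp\xi_1,Z^\sharp\xi_2,Z^\sharp\xi_3\big)=0$.
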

 We denote by $\MC(\eta)$ the set of Maurer--Cartan elements of the Koszul \lione of~$(M,\eta)$.

In view of the above theorem, the following result \cite[Theorem~3.19]{SZPre} is an immediate consequence of Theorem~\ref{theorem: almost Dirac structures} and Corollary~\ref{corollary: MC for bivector fields}.

\begin{Theorem}[Maurer--Cartan elements of the Koszul {$L_\infty[1]$}-algebra] \label{theorem: main result} Let $(M,\eta)$ be a~pre-symplectic manifold. The choice of a complement $G$ to the kernel of $\eta$ determines a bivector field~$Z$ by requiring $Z^\sharp=-\big({\eta \vert_G^\sharp}\big)^{-1}$. Suppose $\beta$ is a $2$-form on $M$, which lies in~$\mathcal{I}_Z$. The following statements are equivalent:
\begin{enumerate}\itemsep=0pt
\item[$1.$] $\beta$ is a Maurer--Cartan element of the Koszul \lione $\Omega_\hor(M)[2]$ of $(M,\eta)$, which was introduced in Theorem~{\rm \ref{theorem: construction - Koszul L-infty}}.
\item[$2.$] The image of $\beta$ under the map $\exp_\eta$, which is introduced in Definition~{\rm \ref{definition: Dirac exponential map}}, is a pre-symplectic structure of the same rank as $\eta$.
\end{enumerate}
\end{Theorem}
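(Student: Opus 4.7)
The plan is to reduce the equivalence to a combination of Theorem~\ref{theorem: almost Dirac structures} (which handles the constant rank condition) and Corollary~\ref{corollary: MC for bivector fields} (which handles the closedness condition), passing through the enlarged $L_\infty[1]$-algebra on $\Omega(M)[2]$. The core observation is that the two conditions (1) and (2) each decompose into a horizontality statement together with a closedness statement, and these two pairs coincide.

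First I would rephrase condition~(2). Since $\exp_\eta(\beta)=\eta+F(\beta)$ and $\eta$ is already closed, the closedness of $\exp_\eta(\beta)$ is equivalent to the closedness of $F(\beta)$. Applying Theorem~\ref{theorem: almost Dirac structures}(i) fibrewise to $TM$ then shows that $\exp_\eta(\beta)$ has constant rank~$k$ if and only if $\beta$ is horizontal, that is, $\beta\in\Omega_\hor^2(M)$, using the identification $\Omega_\hor^2(M)\cong\Gamma(K^*\otimes G^*)\oplus\Gamma(\wedge^2G^*)$ from Remark~\ref{rem:parametrize}. Thus condition~(2) is equivalent to the conjunction: $\beta\in\Omega_\hor^2(M)$ and $F(\beta)$ is closed.

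Next I would unpack condition~(1). By Theorem~\ref{theorem: construction - Koszul L-infty}, the Koszul $L_\infty[1]$-structure is obtained by restricting the one on $\Omega(M)[2]$, so for a horizontal $\beta$ the Maurer--Cartan equation in the Koszul algebra coincides with the Maurer--Cartan equation in $\Omega(M)[2]$. Being MC in the Koszul algebra therefore amounts to: $\beta\in\Omega_\hor^2(M)$ together with the MC equation in $\Omega(M)[2]$. By Corollary~\ref{corollary: MC for bivector fields}, invoking the strengthening announced immediately after it (namely that the open subset $\mathcal{U}$ can be enlarged to all of $\mathcal{I}_Z$, a point deferred to Section~\ref{subsection: integrability via Dirac geometry}), this MC equation for $\beta\in\mathcal{I}_Z$ is equivalent to $F(\beta)$ being closed.

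The two unpackings produce the same pair of conditions on $\beta$, which establishes the equivalence of~(1) and~(2). The only non-routine input is the promised extension of $\mathcal{U}$ to the whole of $\mathcal{I}_Z$, which is the main technical point; once this is in place, the present theorem is a formal combination of the statements above, and the argument is bookkeeping.
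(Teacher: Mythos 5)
Your proposal is correct and takes essentially the same route as the paper, which declares the theorem ``an immediate consequence'' of Theorem~\ref{theorem: almost Dirac structures} and Corollary~\ref{corollary: MC for bivector fields} (via Theorem~\ref{theorem: construction - Koszul L-infty}); your decomposition of both conditions into horizontality of $\beta$ plus closedness of $F(\beta)$ is exactly the intended bookkeeping. You also correctly identify the only substantive input, namely that $\mathcal{U}$ may be taken to be all of $\mathcal{I}_Z$, which the paper establishes in Section~\ref{subsection: integrability via Dirac geometry}.
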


The above Theorem~\ref{theorem: main result} is the main result of \cite{SZPre}, as it states that the Koszul $L_{\infty}[1]$-algebra governs the deformations of the pre-symplectic structure~$\eta$. More precisely: the fibrewise map~$\exp_\eta$ as in \eqref{eq:expetavb}, on the level of sections, restricts to a map
\begin{gather*}
\boxed{ \exp_\eta\colon \ \Gamma(\mathcal{I}_Z)\cap \MC(\eta) \to \Presym^k(M)}
\end{gather*}
which is injective and whose image consists of the pre-symplectic structures of rank equal to the rank of $\eta$ and with kernel transverse to $G$.

\section{Dirac geometric interpretation}\label{section: Dirac}

In the remainder of this note we explain the geometric framework that underlies the results of Section~\ref{section: pre-symplectic structures} recalled from~\cite{SZPre}. We recover naturally the statements made there and provide some alternative and more geometric proofs.

\subsection{Background on Dirac geometry}
We first review some notions from Dirac linear algebra. Let $V$ be a finite-dimensional, real vector space. We denote by $\mathbb{V}$ the direct sum $V\oplus V^*$ and by $\langle \cdot,\cdot \rangle$ the following non-degenerate pairing on $\V$:
\begin{gather*} \langle (v,\xi),(w,\chi)\rangle := \xi(w) + \chi(v).\end{gather*}

\begin{Definition}A subspace $W\subset \V$ is called {\em Lagrangian} if for all $w,w'\in W$ we have $\langle w,w'\rangle =0$ and $\dim(W)=\dim(V)$. Two subspaces $W$ and $\tilde{W}\subset \V$ are {\em transverse}, if $W\oplus \tilde{W}=\V$.
\end{Definition}

Given an element $Z\in \wedge^2V$, we defined the linear map $Z^\sharp \colon V^* \to V$ in Section~\ref{subsection: Dirac parametrization}, and we can consider the Lagrangian subspace $\mathrm{graph}(Z) :=\big\{\big(Z^\sharp\xi,\xi\big) \, | \, \xi \in V^*\big\}\subset \V$. Similarly, for $\beta \in \wedge^2 V^*$ we define $\beta^\sharp\colon V\to V^*$ and consider $\mathrm{graph}(\beta)$.

Every $\beta \in \wedge^2 V^*$ defines an orthogonal transformation $\t_\beta$ of $(\V,\langle \cdot,\cdot \rangle)$, by
\begin{gather*}(v,\xi)\mapsto \big(v,\xi+\beta^\sharp(v)\big).\end{gather*}
Similarly, every $Z\in \wedge^2 V$ gives rise to an orthogonal transformation $\t_Z$, which takes $(v,\xi)$ to $\big(v + Z^\sharp(\xi),\xi\big)$. In particular, elements of $\wedge^2 V^*$ and $\wedge^2 V$ act on the set of Lagrangian subspaces of $\V$.

\begin{Remark}\label{rem:graphdirac} Suppose $L$, $R$ are transverse Lagrangian subspaces of $\V$. There is a canonical isomorphism \begin{gather*}R\cong L^*, \qquad r\mapsto \langle r, \cdot \rangle|_L.\end{gather*}
Since $R$ is transverse to $L$, any subspace of $\V$ transverse to $R$ is the graph of a linear map $L\to R$. Any \emph{Lagrangian} subspace transverse to $R$ is the graph of a linear map $L\to R$ such that, composing with the canonical isomorphism above, we obtain a \emph{skew-symmetric} linear map $L\to L^*$ (i.e., the sharp map associated to an element of $\wedge^2 L^*$).
\end{Remark}

Let us now briefly recall the basic constituencies of Dirac geometry.
Consider the generalized tangent bundle $\T M=TM \oplus T^*M$. It comes equipped with a non-degenerate pairing
\begin{gather*}\langle(X,\alpha),(Y,\beta)\rangle := \alpha(Y) + \beta(X)\end{gather*}
and the Dorfman bracket
\begin{gather*}\oo(X,\alpha),(Y,\beta)\cc=([X,Y],\Lie_X\beta - \iota_Yd\alpha).\end{gather*}
Together with the projection to $TM$, this makes $\T M$ into an example of Courant algebroid.

\begin{Definition} An \emph{almost Dirac structure} on $M$ is a Lagrangian subbundle $L \subset (\T M,\langle\cdot,\cdot\rangle)$. A~\emph{Dirac structure} is an almost Dirac structure whose space of sections is closed
with respect to the Dorfman bracket $\oo\cdot,\cdot\cc$.
 \end{Definition}

\begin{Remark}\label{rem:LWXapp} Let $L$, $R$ be transverse Dirac structures on $M$. As seen in Remark~\ref{rem:graphdirac}, almost Dirac structures transverse to $R$ are in bijection with elements of~$\Gamma\big({\wedge}^2L^*\big)$. We now recall a~result of Liu--Weinstein--Xu \cite{LWX} establishing when such an almost Dirac structure is Dirac. Recall that every Dirac structure, with the restricted Dorfman bracket and anchor, is a Lie algebroid. Since~$L$ is a Lie algebroid, it induces a differential $d_L$ on $\Gamma(\wedge L^*)$. Further\footnote{The Lie algebroid structures on $L$ and $L^*$ are compatible in the sense that the pair $(L,L^*)$ forms a~Lie bialgebroid.}, since $L^*\cong R$ is a Lie algebroid, it induces a graded Lie bracket $[\cdot,\cdot]_{L^*}$ on $\Gamma(\wedge L^*)[1]$. Together with~$d_L$ and $[\cdot,\cdot]_{L^*}$, the graded vector space $\Gamma(\wedge L^*)[1]$ becomes a differential graded Lie algebra. The main result of \cite{LWX} is: for all $\varepsilon\in \Gamma\big({\wedge}^2L^*\big)$, the graph $L_{\varepsilon}=\{v+\iota_v\varepsilon\colon v\in L\}$ is a Dirac structure iff $\varepsilon$ satisfies the Maurer--Cartan equation, that is
\begin{gather*} d_L\varepsilon + \tfrac{1}{2}[\varepsilon,\varepsilon]_{L^*} =0.\end{gather*}
\end{Remark}

\subsection[Deformations of pre-symplectic structures: the point of view of Dirac geometry]{Deformations of pre-symplectic structures:\\ the point of view of Dirac geometry} \label{subsection:Diracview}

In this subsection we cast the deformations of pre-symplectic forms in the framework of Dirac geometry.

Let $\eta$ be a pre-symplectic form on $M$, with kernel $K$. The natural way to parametrize deformations of $\eta$ is by 2-forms $\alpha$ such that $\eta+\alpha$ is again pre-symplectic, but this parametrization has a serious flaw: the space of such $\alpha$'s does not have a natural vector space structure, due to the constant rank condition. Taking the point of view of Dirac geometry, the above approach to parametrize the deformations of $\eta$ amounts to deforming the Dirac structure $\mathrm{graph}(\eta)$ using $\{0\}\oplus T^*M$ as a complement.

A better way to parametrize the deformations of $\eta$ in terms of Dirac geometry works as follows. Let us first choose a complement $G$ to~$K$. Then
\begin{gather*}G\oplus K^*\end{gather*} is a complement\footnote{Indeed, for every $v\in TM$ we have $\iota_v \eta \in K^{\circ}=G^*$, so requiring that $\iota_v \eta $ lies in $K^*$ implies $\iota_v \eta=0$. This means that $v\in K$, so requiring that $v$ lies in $G$ implies $v=0$.} of $\mathrm{graph}(\eta)$. We can now use $G\oplus K^*$~-- instead of $\{0\}\oplus T^*M$~-- to parametrize deformations of the Dirac structure~$\mathrm{graph}(\eta)$. This choice of complement has the advantage of linearizing the constant rank condition, as we show in Proposition~\ref{prop:rankgood} below. (Notice that when~$\eta$ is {\em symplectic}, the new complement is just $TM$, hence we are deforming $\eta$ by viewing it as a~Poisson structure, just as in~\cite[Section~1.3]{SZPre}.)

We first state two lemmas about the effect of applying the orthogonal transformation $\t_{-\eta}$ of $TM\oplus T^*M$, given by $(v,\xi) \mapsto \big(v,\xi - \eta^\sharp(v)\big)$.

\begin{Lemma}\label{lemma:teZ}Denote by $Z\in \Gamma\big({\wedge}^2 G\big)$ the bivector field such that $Z^{\sharp}$ is the inverse of $-(\eta|_G)^{\sharp}$. Then $\t_{-\eta}$ maps $G\oplus K^*$ to $\mathrm{graph}(Z)$.
\end{Lemma}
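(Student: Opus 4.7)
The plan is a direct verification that $\t_{-\eta}$ carries $G\oplus K^*$ into $\mathrm{graph}(Z)$, then a dimension count.

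First I would unpack the identifications. The splitting $TM=K\oplus G$ induces $T^*M=K^{\circ}\oplus G^{\circ}$, and the natural pairings give $K^{\circ}\cong G^*$ and $G^{\circ}\cong K^*$. So ``$K^*\subset T^*M$'' in the statement means $G^{\circ}$, i.e., covectors that annihilate $G$. An element of $G\oplus K^*$ is thus a pair $(v,\xi)$ with $v\in G$ and $\xi\in G^{\circ}$.

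Next I would apply $\t_{-\eta}$ to such a pair, obtaining $\t_{-\eta}(v,\xi)=(v,\xi-\eta^{\sharp}v)$, and check that this lies in $\mathrm{graph}(Z)=\{(Z^{\sharp}\chi,\chi)\colon \chi\in T^*M\}$. With $\chi:=\xi-\eta^{\sharp}v$, the condition to verify is $Z^{\sharp}(\xi-\eta^{\sharp}v)=v$. I would split this into two observations: (a) since $Z\in\Gamma(\wedge^2 G)$, the map $Z^{\sharp}$ factors through restriction to $G$, so $Z^{\sharp}\xi=0$ because $\xi\in G^{\circ}$; (b) for $v\in G$, the covector $\eta^{\sharp}v=\iota_v\eta$ annihilates $K$ (since $K=\ker\eta^{\sharp}$), hence lies in $K^{\circ}\cong G^*$ and agrees with $(\eta|_G)^{\sharp}(v)$ under that identification. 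By the defining relation $Z^{\sharp}=-(\eta|_G)^{\sharp,-1}$ on $G^*$, we then get $Z^{\sharp}\eta^{\sharp}v=-v$, so $Z^{\sharp}(\xi-\eta^{\sharp}v)=v$ as required.

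This shows $\t_{-\eta}(G\oplus K^*)\subset\mathrm{graph}(Z)$. Since $\t_{-\eta}$ is a linear isomorphism of $TM\oplus T^*M$ and both $G\oplus K^*$ and $\mathrm{graph}(Z)$ have fiber rank equal to $\dim M$ (each being a Lagrangian complement of $\mathrm{graph}(\eta)$, respectively of $\{0\}\oplus T^*M$), the inclusion is an equality.

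The only delicate point is the bookkeeping around the identifications $K^{\circ}\cong G^*$ and $G^{\circ}\cong K^*$, and in particular recognizing that $\eta^{\sharp}v$ for $v\in G$ automatically lands in $K^{\circ}$, so that composing with $Z^{\sharp}$ makes sense and reproduces $(\eta|_G)^{\sharp}$. Once this is clear, the rest of the verification is a one-line computation.
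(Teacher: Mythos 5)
Your proof is correct and is essentially the paper's argument run in the opposite direction: the paper's one\-/line proof computes $\t_\eta(\mathrm{graph}(Z))=\big\{\big(Z^\sharp\xi,\xi|_K\big)\colon \xi\in T^*M\big\}=G\oplus K^*$, which gives the equality directly, whereas you push $G\oplus K^*$ forward by $\t_{-\eta}$ using the same two observations ($Z^\sharp$ kills $G^{\circ}$ and $Z^\sharp\eta^\sharp=-\id$ on $G$) and then need the rank count to upgrade the inclusion to an equality. One small slip in your final parenthetical: $\mathrm{graph}(Z)$ is a Lagrangian complement of $TM$, not of $\{0\}\oplus T^*M$ (their intersection is $\ker\big(Z^\sharp\big)=K^*$, which is nonzero whenever $\eta$ is not symplectic), but the rank claim you actually need is still immediate, since $\mathrm{graph}(Z)$ is the image of the injective map $\xi\mapsto\big(Z^\sharp\xi,\xi\big)$ and hence has fiber rank $\dim M$.
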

\begin{proof}$\t_\eta(\mathrm{graph}(Z))= \big\{\big( Z^\sharp\xi , \xi|_K \big)\colon \xi\in T^*M\big\}=G\oplus K^*$.\end{proof}

Lagrangian subbundles nearby $\mathrm{graph}(\eta)$ can be written, for some $\bar{\beta}\in \Gamma(\wedge^2(\mathrm{graph}(\eta))^*)$, as the graph of the map
\begin{gather*}\bar{\beta}^{\sharp}\colon \ \mathrm{graph}(\eta) \to (\mathrm{graph}(\eta))^*\cong G\oplus K^*,\end{gather*}
by Remark \ref{rem:graphdirac}. We denote this graph as $\Phi_{G\oplus K^*}\big(\bar{\beta}\big)$. Moreover, let $\beta\in \Omega^2(M)$ be the 2-form corresponding to $\bar{\beta}$ under the isomorphism $\mathrm{graph}(\eta)\cong TM, v+\iota_v\eta \mapsto v$ and denote by $\Phi_{Z}({\beta})$ the graph of the map $ \beta^{\sharp}\colon TM\to T^*M\cong \mathrm{graph}(Z)$.

\begin{Lemma}\label{lemma:tme} $\t_{-\eta}$ maps $\Phi_{G\oplus K^*}\big(\bar{\beta}\big)$ to $\Phi_{Z}({\beta})$.
\end{Lemma}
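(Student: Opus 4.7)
The natural approach is to observe that $\t_{-\eta}$ is an orthogonal automorphism of $\T M$, hence preserves Lagrangian subbundles, transversality, and the pairing-based identifications underlying Remark~\ref{rem:graphdirac}. The plan is to reduce the lemma to recognizing an induced isomorphism $\graph(\eta) \xrightarrow{\cong} TM$ as the canonical one used to match $\bar\beta$ with $\beta$.

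First I would note that $\t_{-\eta}$ sends the ordered pair of transverse Lagrangians $(\graph(\eta),\, G \oplus K^*)$ to the pair $(TM,\, \graph(Z))$: the first summand by the direct computation $\t_{-\eta}(v, \iota_v\eta) = (v, 0)$, and the second by Lemma~\ref{lemma:teZ}. Consequently $\t_{-\eta}$ maps Lagrangian subbundles transverse to $G \oplus K^*$ to Lagrangian subbundles transverse to $\graph(Z)$, so $\t_{-\eta}\big(\Phi_{G \oplus K^*}(\bar\beta)\big) = \Phi_{Z}(\tilde\beta)$ for a uniquely determined $\tilde\beta \in \Omega^2(M)$. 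Next I would invoke the naturality of the graph parametrization of Remark~\ref{rem:graphdirac}: since the construction $\Phi_R$ depends only on the pairing and on the chosen complement $R$, any orthogonal transformation $T$ with $T(L) = L'$, $T(R) = R'$ satisfies $T(\Phi_R(\bar\beta)) = \Phi_{R'}\big(((T|_L)^{-1})^*\bar\beta\big)$. Applied to $T = \t_{-\eta}$ with $L = \graph(\eta)$ and $L' = TM$, this gives $\tilde\beta = ((T|_{\graph(\eta)})^{-1})^*\bar\beta$; but $(T|_{\graph(\eta)})^{-1}\colon TM \to \graph(\eta)$, $u \mapsto (u, \iota_u\eta)$, is precisely the canonical isomorphism used to define $\bar\beta$ from $\beta$. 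Hence $\tilde\beta = \beta$.

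The only non-formal step, and the one that carries the computational content, is verifying the naturality of $\Phi_R$ under orthogonal transformations. Concretely, for $w = v + \iota_v\eta \in \graph(\eta)$ one has to check that $\t_{-\eta}(\bar\beta^\sharp(w)) \in \graph(Z)$ coincides with the image of $\beta^\sharp(v) \in T^*M$ under the canonical identification $T^*M \cong \graph(Z)$, $\xi \mapsto (Z^\sharp\xi, \xi)$. This should be a direct unwinding of the pairing-based isomorphisms $R \cong L^*$ on both sides, together with the defining relation $\eta^\sharp|_G \circ Z^\sharp = -\id$ for $Z$; I expect this bookkeeping to be the only place where one needs to manipulate components explicitly.
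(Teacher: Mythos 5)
Your argument is correct and is essentially the paper's own proof: the paper likewise notes that $\t_{-\eta}$ preserves the pairing, sends $\graph(\eta)$ to $TM$ and $G\oplus K^*$ to $\graph(Z)$ (via Lemma~\ref{lemma:teZ}), and then concludes ``by functoriality'' --- which is exactly the naturality of $\Phi_R$ that you spell out, including the observation that the induced isomorphism $\graph(\eta)\cong TM$ is the canonical one relating $\bar\beta$ to $\beta$. Your version just makes the functoriality step explicit, which is a reasonable elaboration rather than a different route.
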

\begin{proof}$\t_{-\eta}$ preserves the pairing on $TM\oplus T^*M$, clearly maps $\mathrm{graph}(\eta)$ to $TM$, and maps $G\oplus K^*$ to $\mathrm{graph}(Z)$ by Lemma \ref{lemma:teZ}. Therefore the statement follows by functoriality.
\end{proof}

Now we can explain why the choice of $G\oplus K^*$ as a complement is a good one to describe pre-symplectic deformations.

\begin{Proposition}\label{prop:rankgood} Let $\bar{\beta}\in \Gamma\big({\wedge}^2(\mathrm{graph}(\eta))^*\big)$.
\begin{enumerate}\itemsep=0pt
\item[$(i)$] The rank of
\begin{gather}\label{eq:cap1}
\Phi_{G\oplus K^*}\big(\bar{\beta}\big)\cap TM
\end{gather}
 equals the rank of
 \begin{gather}\label{eq:cap2}
\{v\in K\colon \iota_v\beta \in G^*\}.
\end{gather}
\item[$(ii)$] Assume that $\Phi_{G\oplus K^*}\big(\bar{\beta}\big)$ is the graph of a $2$-form. Then the rank of this $2$-form equals $\mathrm{rank}({\eta})$ if{f} $\beta$ lies in the vector space $\Omega^2_\hor(M)$ of horizontal $2$-forms.
\end{enumerate}
\end{Proposition}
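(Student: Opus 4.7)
The plan is to reduce both parts to linear algebra by transporting $\Phi_{G\oplus K^*}(\bar{\beta})$ across the orthogonal transformation $\t_{-\eta}$ of Lemma~\ref{lemma:tme}. Since $\t_{-\eta}$ is a linear isomorphism of $TM\oplus T^*M$, it commutes with intersections; by Lemma~\ref{lemma:tme} it sends $\Phi_{G\oplus K^*}(\bar{\beta})$ to $\Phi_Z(\beta)$, and directly from its definition $(v,\xi)\mapsto(v,\xi-\eta^\sharp(v))$ it sends $TM$ onto $\graph(-\eta)$. Hence for part $(i)$ it suffices to analyze $\Phi_Z(\beta)\cap \graph(-\eta)$.

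A general element of $\Phi_Z(\beta)$ has the form $\big(v+Z^\sharp\beta^\sharp v,\,\beta^\sharp v\big)$ for $v\in TM$. I would write $\beta^\sharp v = \alpha+\gamma$ along the decomposition $T^*M=G^*\oplus K^*$ (with $G^*=K^\circ$ and $K^*=G^\circ$) and exploit the two identities $Z^\sharp\gamma=0$ (because $Z\in \wedge^2G$ annihilates $K^*=G^\circ$) and $\eta^\sharp(Z^\sharp\alpha)=-\alpha$ (because $Z^\sharp|_{G^*}=-(\eta|_G^\sharp)^{-1}$). Substituting these into the membership condition $\beta^\sharp v=-\eta^\sharp(v+Z^\sharp\beta^\sharp v)$ reduces it to $\gamma=-\iota_v\eta$; since $\gamma\in K^*$ and $\iota_v\eta\in G^*$ have zero intersection, both sides must vanish, forcing $v\in K$ and $\iota_v\beta\in G^*$. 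The resulting parametrization $v\mapsto (v+Z^\sharp\alpha,\alpha)$ is injective because the $K$-component of $v+Z^\sharp\alpha\in K\oplus G$ recovers $v$. This yields $(i)$.

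Part $(ii)$ follows quickly: under the assumption $\Phi_{G\oplus K^*}(\bar{\beta})=\graph(\eta')$, the intersection with $TM$ is $\ker\big((\eta')^\sharp\big)$, of dimension $\dim M-\mathrm{rank}(\eta')$. Since $\dim K=\dim M-\mathrm{rank}(\eta)$, the two ranks coincide if and only if this intersection attains dimension $\dim K$, which by $(i)$ amounts to $\{v\in K:\iota_v\beta\in G^*\}=K$, i.e., $\iota_v\beta\in K^\circ$ for every $v\in K$. Unfolding, this is $\beta(v,w)=0$ for all $v,w\in K$, which is precisely $\beta\in \Omega^2_\hor(M)$.

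The main obstacle I foresee is the componentwise analysis in $(i)$: one must carefully juggle the decompositions $TM=K\oplus G$ and $T^*M=G^*\oplus K^*$ together with the interplay of $Z^\sharp$ and $\eta^\sharp$. Once the two key identities $Z^\sharp|_{K^*}=0$ and $\eta^\sharp\circ Z^\sharp|_{G^*}=-\id_{G^*}$ are isolated, both parts drop out essentially by inspection.
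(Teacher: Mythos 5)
Your proof is correct and follows essentially the same route as the paper: both transport the intersection through the orthogonal transformation of Lemma~\ref{lemma:tme} and then reduce to fiberwise linear algebra, with part~(ii) read off from the kernel dimension. The only difference is that the paper composes with the further transformation $\t_{-Z}$, turning the intersection into $\graph(\beta)\cap(K\oplus G^*)$ and thereby avoiding the componentwise computation with the identities $Z^\sharp|_{K^*}=0$ and $\eta^\sharp \circ Z^\sharp|_{G^*}=-\id$ that you carry out by hand.
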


\begin{proof} (i) Applying the transformation $\t_{-Z}\circ \t_{-\eta}$ to $\Phi_{G\oplus K^*}\big(\bar{\beta}\big)$, by Lemma~\ref{lemma:tme} we obtain $\t_{-Z}(\Phi_{Z}({\beta}))=\mathrm{graph}(\beta)$. Applying it to $TM$ we obtain $\big\{\big(v+Z^\sharp\iota_v\eta, -\iota_v\eta\big)\,|\,v\in {TM} \big\}=K\oplus G^*$.

Hence applying the transformation to the intersection~\eqref{eq:cap1} we obtain
\begin{gather*}\mathrm{graph}(\beta)\cap (K\oplus G^*),\end{gather*} which is isomorphic to \eqref{eq:cap2}.

(ii) Denote by $\eta'$ the 2-form whose graph is $\Phi_{G\oplus K^*}\big(\bar{\beta}\big)$. The kernel of $\eta'$ is given by \eqref{eq:cap1}, and the assertion follows immediately from (i). Recall that the vector space $\Omega^2_{\hor}(M)$ of horizontal 2-forms was defined in Section~\ref{subsection: pre-symplectic structures}, {as the space of 2-forms that vanish on $\wedge^2 K$.}
\end{proof}

\begin{Remark}\label{Rem:twodefproblems}Since $\t_{-\eta}$ is actually an automorphism of the standard Courant algebroid $TM\oplus T^*M$, the following two deformation problems of Dirac structures are equivalent:
 \begin{itemize}\itemsep=0pt
\item deformations of $\mathrm{graph}(\eta)$, using the complement $G\oplus K^*$,
\item deformations of $TM$, using the complement $\mathrm{graph}(Z)$.
\end{itemize}
The latter deformation problem is easier to handle, and the $L_{\infty}[1]$-algebra structure governing it will be recovered in Section~\ref{subsection: integrability via Dirac geometry}.
\end{Remark}

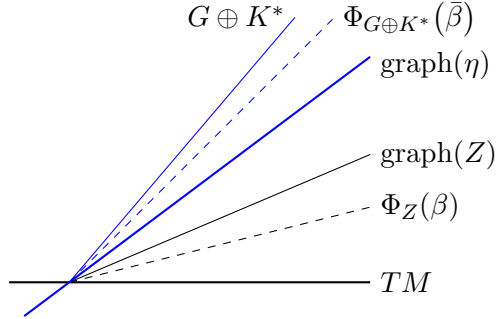
\begin{figure}[h]\centering
\begin{tikzpicture}
\draw[thick] (-0.8,0) -- (4,0); \node [right] at (4,0) {$TM$};
\draw
(0,0) -- (4,1.7); \node [right] at (4,1.7) {graph($Z$)};
\draw[dashed] (0,0) -- (4,1); \node[right] at (4,1) {$\Phi_{Z}({\beta})$};
\draw[thick,blue]
(-0.6,-0.45) -- (4,3); \node [right] at (4,2.9) {graph($\eta$)};
\draw[blue]
(0,0) -- (3,4.7*3/4); \node [left] at (3,4.7*3/4) {$G\oplus K^*$};
\draw[dashed,blue] (0,0) -- (3.5,4*3.5/4); \node[right] at (3.5,4*3.5/4) {$\Phi_{G\oplus K^*}\big(\bar{\beta}\big)$};
\end{tikzpicture}
\caption{The Dirac structures graph($\eta$) and $TM$, together with the complementary Lagrangian subbundles we use to deform them.}
\end{figure}

\subsection{Dirac-geometric interpretation of Section~\ref{subsection: Dirac parametrization}}\label{subsection: Dirac-geometric interpretation}

Using Dirac linear algebra, we explain and re-prove the results recalled in Section~\ref{subsection: Dirac parametrization}.

\subsubsection[Revisiting the map $F$ from formula (\ref{eq:alphabeta})]{Revisiting the map $\boldsymbol{F}$ from formula (\ref{eq:alphabeta})}

Let $V$ be a finite-dimensional, real vector space. We fix a bivector $Z\in \wedge^2V$. Recall that $\mathcal{I}_Z$ consists of elements $\beta\in \wedge^2 V^*$ such that $\mathrm{id} + Z^\sharp \beta^\sharp $ is invertible. In formula~\eqref{eq:alphabeta}, we defined the map $F \colon \mathcal{I}_Z\to \wedge^2 V^*$ given by
\begin{gather*}
(F (\beta))^\sharp = \beta^\sharp\big(\id + Z^{\sharp}\beta^\sharp\big)^{-1}.\end{gather*}
The following lemma provides a geometric explanation of the map $F$.

\begin{Lemma}\label{lemma: almost Dirac structures} Fix $Z\in \wedge^2V$.\samepage
\begin{enumerate}\itemsep=0pt
\item[$(i)$] Taking graphs with respect to the decompositions $\V=V\oplus V^*$ resp.~$\V=V\oplus \mathrm{graph}(Z)$, yields bijections
\begin{align*}
 \Phi_0\colon \wedge^2 V^* &\stackrel{\cong}{\longrightarrow} \{\textrm{Lagrangian subspace of $\V$ transverse to } V^*\},\\
 {\alpha} & \mapsto \{(v,\iota_v{\alpha})\,|\,v\in V\},\\
 \Phi_Z\colon \wedge^2 V^* &\stackrel{\cong}{\longrightarrow} \{\textrm{Lagrangian subspace of $\V$ transverse to } {\mathrm{graph}(Z)}\},\\
 \beta &\mapsto \big\{\big(v+Z^\sharp(\iota_v\beta),\iota_v\beta\big)\,|\,v\in V\big\}.
\end{align*}
\item[$(ii)$] Given $\beta \in \wedge^2 V^*$, the Lagrangian subspace $\Phi_Z(\beta)$ is transverse to $V^*\subset \V$ if, and only if {$\beta\in \mathcal{I}_Z$}.
\item[$(iii)$]
The map \begin{gather*}\Phi_0^{-1}\circ \Phi_Z\colon \ \mathcal{I}_Z\to \wedge^2 V^*\end{gather*} is well-defined and coincides with~$F$.
\end{enumerate}
\end{Lemma}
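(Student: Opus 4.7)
My plan is to use Remark~\ref{rem:graphdirac} as the unifying tool. That remark parametrizes the Lagrangian subspaces of $\V$ transverse to a fixed Lagrangian $R$ as graphs of skew-symmetric maps $L\to R\cong L^*$, once a complementary Lagrangian $L$ has been chosen. I will apply this with $L=V$ and $R$ taken to be either $V^*$ or $\graph(Z)$, which correspond respectively to the maps $\Phi_0$ and $\Phi_Z$.

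For part $(i)$, the statement about $\Phi_0$ is standard (the canonical isomorphism $V^*\to V^*$ is the identity, so graphs of elements of $\wedge^2V^*$ exhaust all Lagrangians transverse to~$V^*$). For~$\Phi_Z$, I will make the canonical isomorphism $\graph(Z)\to V^*$ provided by Remark~\ref{rem:graphdirac} explicit: it is $\big(Z^\sharp\xi,\xi\big)\mapsto \xi$, since pairing $\big(Z^\sharp\xi,\xi\big)$ against $(v,0)\in V$ yields $\xi(v)$. Hence an element $\beta\in \wedge^2V^*$ corresponds to the map $V\to \graph(Z)$, $v\mapsto \big(Z^\sharp(\iota_v\beta),\iota_v\beta\big)$, and its graph is exactly the subspace $\Phi_Z(\beta)$ displayed in the lemma. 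As a sanity check, I will verify directly that $\Phi_Z(\beta)$ is isotropic, using the skew-symmetry of $Z$: for $v,w\in V$, the two cross-terms in the pairing give $Z(\iota_w\beta,\iota_v\beta)+Z(\iota_v\beta,\iota_w\beta)=0$.

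For part $(ii)$, I will note that a Lagrangian subspace of~$\V$ is transverse to~$V^*$ if and only if its projection to $V$ is an isomorphism. Projecting $\Phi_Z(\beta)$ to $V$ gives the endomorphism $v\mapsto v+Z^\sharp(\iota_v\beta)=\big(\id+Z^\sharp\beta^\sharp\big)(v)$, so transversality to $V^*$ is equivalent to the invertibility of $\id+Z^\sharp\beta^\sharp$, i.e.\ to $\beta\in\mathcal{I}_Z$.

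For part $(iii)$, having $\beta\in\mathcal{I}_Z$ guarantees by $(ii)$ that $\Phi_Z(\beta)=\Phi_0(\alpha)$ for a unique $\alpha\in\wedge^2V^*$, so $\Phi_0^{-1}\circ\Phi_Z$ is well defined. To identify $\alpha$ with $F(\beta)$, I will set $w:=\big(\id+Z^\sharp\beta^\sharp\big)(v)$, so that $v=\big(\id+Z^\sharp\beta^\sharp\big)^{-1}(w)$, and rewrite the generic element of $\Phi_Z(\beta)$ as $\big(w,\beta^\sharp\big(\id+Z^\sharp\beta^\sharp\big)^{-1}(w)\big)$; comparing with $(w,\iota_w\alpha)$ recovers exactly $\alpha^\sharp=\beta^\sharp\big(\id+Z^\sharp\beta^\sharp\big)^{-1}$, which is the defining relation~\eqref{eq:alphabeta} of~$F$. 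I expect no serious obstacle: the only slightly delicate book-keeping is tracking the canonical identification $\graph(Z)\cong V^*$ in part~$(i)$ so that the explicit formula for~$\Phi_Z(\beta)$ matches the abstract construction of Remark~\ref{rem:graphdirac}.
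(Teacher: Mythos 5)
Your proposal is correct and follows essentially the same route as the paper: both rest on Remark~\ref{rem:graphdirac} with the explicit identification $\mathrm{graph}(Z)\cong V^*$, $\big(Z^\sharp\xi,\xi\big)\mapsto\xi$, for part~$(i)$, and the same substitution $w=\big(\id+Z^\sharp\beta^\sharp\big)(v)$ for part~$(iii)$. The only cosmetic difference is in part~$(ii)$, where you phrase transversality to $V^*$ as invertibility of the projection of $\Phi_Z(\beta)$ to $V$ rather than computing $\Phi_Z(\beta)\cap V^*$ directly; the two are immediately equivalent.
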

In particular, the map $F$ is characterized by the property that
\begin{gather}\label{eq:phizbeta}
\mathrm{graph}(F(\beta))=\Phi_Z(\beta)
\end{gather}
 for all $\beta\in \mathcal{I}_Z$. In other words, $F(\beta)$ is obtained taking the graph of $\beta$ w.r.t.\ the splitting $\V=V\oplus \mathrm{graph}(Z)$.

\begin{proof}(i) According to Remark \ref{rem:graphdirac}, any Lagrangian subspace transverse to $V^*$ is the graph of a~skew-symmetric linear map $V\to V^*$, and therefore can be written as $\{(v,\iota_v\alpha)\,|\,v\in V\}$ for some $\alpha\in \wedge^2 V^*$. Similarly, $\mathrm{graph}(Z)$ is transverse to $V$ and the induced isomorphism $\mathrm{graph}(Z)\cong V^*$ is just $(Z^\sharp(\xi),\xi)\mapsto \xi$. Hence any Lagrangian subspace transverse to $\mathrm{graph}(Z)$ can be written as $\{(v,0)+(Z^\sharp(\iota_v\beta),\iota_v\beta)\,|\,v\in V\}$ for some $\beta\in \wedge^2 V^*$.

(ii) The expression for $\Phi_Z(\beta)$ in item (i) shows that $\Phi_Z(\beta)\cap V^* =\{(0,\iota_v\beta)\,|\,v\in V, v+Z^\sharp(\iota_v\beta)=0\}$. This intersection is trivial if{f} $\ker\big(\mathrm{id} + Z^\sharp \beta^\sharp\big) \subseteq \ker\big(\beta^\sharp\big)$. In turn, this condition is equivalent to $\big(\mathrm{id} + Z^\sharp \beta^\sharp\big)$ being injective, and thus invertible.

(iii) Finally, if $\mathrm{id}+Z^\sharp\beta^\sharp$ is invertible, $\Phi_Z(\beta)$ is transverse to $V^*$ by item (ii). By item (i) the element $\Phi_0^{-1}(\Phi_Z(\beta))$ is well-defined. In concrete terms, {it is given by} $\alpha \in \wedge^2 V^*$ such that for all $v\in V$, there is $w \in V$ for which
\begin{gather*}\big(v + Z^\sharp \beta^\sharp(v),\beta^\sharp(v)\big)=\big(w,\alpha^\sharp(w)\big)\end{gather*}
holds. Equivalently, this means that $\alpha^\sharp\big(\mathrm{id}+Z^\sharp\beta^\sharp\big)(v)=\beta^\sharp(v)$ for all $v\in V$. This shows that $\Phi_0^{-1}\circ \Phi_Z$ agrees with~$F$.
\end{proof}

\subsubsection{{Revisiting Theorem~\ref{theorem: almost Dirac structures} (parametrizing constant rank forms)}}

Now let $\eta \in \wedge^2 V^*$ be of rank $k$, fix a complement $G$ to $K:=\ker(\eta)$, and denote by $Z \in \wedge^2 G$ the bivector determined by $ Z^\sharp=-\big(\eta\vert_G^\sharp\big)^{-1}$. In Section~\ref{subsection:Diracview} we considered deformations of the Dirac structure $\graph(\eta)$ using $G\oplus K^*$ as a complement. They are graphs of $2$-forms given by the Dirac exponential map $\exp_{\eta}$ (see Definition~\ref{definition: Dirac exponential map}). More precisely:
\begin{Lemma}\label{lemma:niceeq} For all $\beta\in \mathcal{I}_Z$ we have
\begin{gather}\label{eq:graphexp}
\mathrm{graph}(\exp_{\eta}(\beta))=\Phi_{G\oplus K^*}\big(\bar{\beta}\big).
\end{gather}
\end{Lemma}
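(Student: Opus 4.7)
The plan is to reduce the claim to the already established identity $\mathrm{graph}(F(\beta))=\Phi_Z(\beta)$ (equation~\eqref{eq:phizbeta}) by applying the orthogonal automorphism $\mathfrak{t}_{-\eta}$ of $TM\oplus T^*M$ to both sides and showing the images coincide; since $\mathfrak{t}_{-\eta}$ is a fibrewise bijection, this will suffice.

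First I would compute the left-hand side. By Definition~\ref{definition: Dirac exponential map}, $\exp_{\eta}(\beta)=\eta+F(\beta)$, so $\mathrm{graph}(\exp_{\eta}(\beta))=\{(v,\iota_v\eta+\iota_v F(\beta)):v\in TM\}$. Applying $\mathfrak{t}_{-\eta}$, which sends $(v,\xi)$ to $(v,\xi-\iota_v\eta)$, gives
\begin{gather*}
\mathfrak{t}_{-\eta}(\mathrm{graph}(\exp_{\eta}(\beta)))=\{(v,\iota_v F(\beta)):v\in TM\}=\mathrm{graph}(F(\beta)).
\end{gather*}
By the characterization~\eqref{eq:phizbeta} of $F$ established in Lemma~\ref{lemma: almost Dirac structures}(iii), this equals $\Phi_Z(\beta)$.

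Next I would compute the right-hand side. This is essentially immediate from Lemma~\ref{lemma:tme}, which states precisely that $\mathfrak{t}_{-\eta}$ maps $\Phi_{G\oplus K^*}(\bar{\beta})$ to $\Phi_Z(\beta)$, where $\bar{\beta}$ and $\beta$ correspond to each other under the isomorphism $\mathrm{graph}(\eta)\cong TM$, $v+\iota_v\eta\mapsto v$.

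Combining the two computations, both $\mathrm{graph}(\exp_{\eta}(\beta))$ and $\Phi_{G\oplus K^*}(\bar{\beta})$ are mapped by $\mathfrak{t}_{-\eta}$ to $\Phi_Z(\beta)$. Since $\mathfrak{t}_{-\eta}$ is a fibrewise bijection, the two Lagrangian subbundles must be equal, proving~\eqref{eq:graphexp}. There is no real obstacle here: the content of the lemma has already been absorbed into Lemmas~\ref{lemma:teZ}--\ref{lemma:tme} and the geometric interpretation of $F$ in Lemma~\ref{lemma: almost Dirac structures}; the present statement is just the functoriality diagram for these three pieces.
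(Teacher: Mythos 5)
Your proof is correct and is essentially the paper's own argument: the paper writes the same content as the chain $\mathrm{graph}(\exp_{\eta}(\beta))=\t_\eta(\Phi_Z(\beta))=\Phi_{G\oplus K^*}\big(\bar{\beta}\big)$, invoking equation~\eqref{eq:phizbeta} and Lemma~\ref{lemma:tme}, exactly the two ingredients you use. Applying $\t_{-\eta}$ to both sides rather than $\t_\eta$ to one is only a cosmetic reorganization.
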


 \begin{proof}We have $\mathrm{graph}(\exp_{\eta}(\beta))=\t_\eta(\Phi_Z(\beta))=\Phi_{G\oplus K^*}\big(\bar{\beta}\big)$, where the first equality holds by equation \eqref{eq:phizbeta} and the second by Lemma~\ref{lemma:tme}.
\end{proof}

Using this we recover Theorem~\ref{theorem: almost Dirac structures}, in particular item (i) stating that $\exp_{\eta}(\beta)$ has rank equal to $k=\dim(K)$ if{f} $\beta$ is horizontal.

\begin{proof}[Alternative proof of Theorem~\ref{theorem: almost Dirac structures}] {\sloppy (i)~Apply Proposition~\ref{prop:rankgood}(ii) together with equa\-tion~\eqref{eq:graphexp}.

}

(ii) {We only prove the statement about the kernel of $\exp_{\eta}(\beta)$. Write $\beta = (\mu,\sigma)$. By the proof of Proposition~\ref{prop:rankgood}(i), the intersection of the {subspace~\eqref{eq:graphexp} with $V$} is $(\t_{\eta}\circ \t_{Z})(\mathrm{graph}(\beta)\cap (K\oplus G^*))$, which is precisely the image of $K$ under $\id + Z^\sharp \mu^\sharp $.}

(iii) By Lemma \ref{lemma: almost Dirac structures}(ii), the map $\Phi_Z$ provides a bijection between $\mathcal{I}_Z$ and Lagrangian subspaces transverse to ${\mathrm{graph}(Z)}$ and to $V^*$. Hence $\t_\eta \circ \Phi_Z$
provides a bijection between $\mathcal{I}_Z$ and Lagrangian subspaces transverse to $\t_\eta({\mathrm{graph}(Z)})=G\oplus K^*$ (see Lemma~\ref{lemma:teZ}) and to $V^*$. The latter are exactly the graphs of elements $\eta' \in \wedge^2 V^*$ so that the $\eta'|_{\wedge^2G}$ is non-degenerate. Hence, by the proof of Lemma~\ref{lemma:niceeq}, $\exp_{\eta}$ provides a bijection between $\mathcal{I}_Z$ and such $\eta'$. We conclude using~(i).
\end{proof}

\subsection{Dirac-geometric interpretation of Section~\ref{subsection: Koszul for bivectors}}\label{subsection: integrability via Dirac geometry}

Using Dirac geometry {and adapting results from \cite{FZgeo}}, we explain and re-prove {the} results recalled in Section~\ref{subsection: Koszul for bivectors}. Fix a bivector field $Z$ on~$M$.

\subsubsection[Revisiting Proposition \ref{proposition: extended Koszul} (the {$L_\infty[1]$}-algebra {$\Omega(M)[2]$})]{Revisiting Proposition \ref{proposition: extended Koszul} (the $\boldsymbol{L_\infty[1]}$-algebra $\boldsymbol{\Omega(M)[2]}$)}
In Proposition \ref{proposition: extended Koszul}, the $L_\infty[1]$-algebra $(\Omega(M)[2],\lambda_1,\lambda_2,\lambda_3)$ was constructed out of a bivector field $Z$. It can be recovered using Dirac geometry~-- or more precisely, the deformation theory of Dirac structures~-- as a special case of the construction from \cite[Section~2.2]{FZgeo}.

\begin{Proposition}\label{lem:2.6first} Let $L$ be a Dirac structure and $R$ a complementary almost Dirac structure, i.e., we have a vector bundle decomposition $L\oplus R=\TT M$. Then $\Gamma(\wedge L^*)[2]$ has an induced $L_{\infty}[1]$-algebra structure, whose only non-trivial multibrackets are $\mu_1$, $\mu_2$, $\mu_3$ given as follows:
\begin{enumerate}\itemsep=0pt
\item[$1)$] $\mu_1$ is the differential $d_L$ associated to the Lie algebroid $L$,
\item[$2)$] $\mu_2(\alpha[2] \odot \beta[2])=-(-1)^{|\alpha|}[\alpha,\beta]_{L^*}[2]$, where $[\cdot,\cdot]_{L^*}:=\mathrm{pr}_{R}(\oo \cdot , \cdot \cc)$ denotes the $($extension of$)$ the bracket of the almost Lie algebroid $R\cong L^*$,
\item[$3)$] $\mu_3(\alpha[2]\odot \beta[2]\odot \gamma[2])=(-1)^{|\beta|} \big(\alpha^\sharp\wedge \beta^\sharp \wedge \gamma^\sharp\big) \psi[2]$, where $\psi\in \Gamma\big({\wedge}^3 L\big)$ is given by $\Gamma\big({\wedge}^3 L^*\big)\to \smooth^\infty(M)$, $\xi_1 \wedge \xi_2\wedge \xi_3 \mapsto \langle \mathrm{pr}_{L}(\oo \xi_1 , \xi_2 \cc), \xi_3\rangle$, where we made use of the identification $R \cong L^*$.
\end{enumerate}
\end{Proposition}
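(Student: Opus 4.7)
The plan is to obtain the stated $L_\infty[1]$-algebra as a direct specialization of the general construction in \cite[Section~2.2]{FZgeo}. In that work, given a Courant algebroid $\mathsf{E}$, a Dirac structure $L \subset \mathsf{E}$, and a complementary Lagrangian subbundle $R$, an $L_\infty[1]$-algebra is manufactured on $\Gamma(\wedge L^*)[2]$ whose Maurer--Cartan elements correspond to the almost Dirac deformations of $L$ transverse to $R$ that are actually Dirac. I would apply this to $\mathsf{E} = \TT M = TM \oplus T^*M$, with the given pair $(L, R)$.

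First I would unpack the ingredients in the case at hand. Via the pairing on $\TT M$, the complement yields an identification $R \cong L^*$, which equips $L^*$ with an almost Lie algebroid structure: the bracket is $[\cdot,\cdot]_{L^*} = \mathrm{pr}_{R}(\oo \cdot,\cdot \cc)$ after transferring along $R \cong L^*$, and the anchor is inherited from $R \hookrightarrow \TT M \to TM$. Since $L$ is an honest Dirac structure, $d_L$ is a genuine differential on $\Gamma(\wedge L^*)$, giving $\mu_1^2 = 0$. The bracket $[\cdot,\cdot]_{L^*}$ extends as a graded bi-derivation on $\Gamma(\wedge L^*)[1]$ and, up to the décalage signs arising from the $[2]$-shift, defines $\mu_2$. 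The $C^\infty(M)$-trilinear failure of Jacobi for $[\cdot,\cdot]_{L^*}$ is then measured precisely by the element $\psi \in \Gamma(\wedge^3 L)$ defined in the statement; note that $\psi$ vanishes iff $R$ is itself a Dirac structure, in which case the statement collapses to the classical Liu--Weinstein--Xu DGLA already recalled in Remark~\ref{rem:LWXapp}.

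The role of $\mu_3$ is to homotope this Jacobi failure at the level of $\Gamma(\wedge L^*)[2]$. Following \cite[Section~2.2]{FZgeo}, one extends $\psi$ to an operation on differential forms of arbitrary degree by the rule $\alpha \odot \beta \odot \gamma \mapsto (\alpha^\sharp \wedge \beta^\sharp \wedge \gamma^\sharp)(\psi)$, using the multi-contraction recalled in Section~\ref{subsection: Koszul for bivectors}. With the appropriate sign $(-1)^{|\beta|}$, this is exactly the 3-bracket that closes the higher Jacobi relations, and no higher multibrackets are needed. I would then invoke the higher coherence check from \cite[Section~2.2]{FZgeo} rather than redo it in coordinates, since that construction is carried out in the generality required here.

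The main obstacle is not conceptual but notational: aligning the sign conventions between \cite{FZgeo} and the present proposition, particularly the $(-1)^{|\alpha|}$ in $\mu_2$ and $(-1)^{|\beta|}$ in $\mu_3$, after accounting for the degree shift by two and for the identification $R \cong L^*$ (which is only an isomorphism of almost Lie algebroids once the signs in the Dorfman bracket are handled with care). Once this bookkeeping is matched, the statement becomes a direct instance of the cited construction.
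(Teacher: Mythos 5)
Your proposal is correct and follows essentially the same route as the paper: both reduce the statement to the first part of \cite[Lemma~2.6]{FZgeo} (with the extra term $\varphi$ set to zero) and defer the coherence checks to that reference, the only cosmetic difference being that the paper recalls the mechanism behind the cited lemma as Voronov's higher derived brackets applied to Roytenberg's graded-geometric description of $\TT M$, whereas you phrase $\mu_3$ informally as the homotopy correcting the Jacobi failure of $[\cdot,\cdot]_{L^*}$.
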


More generally, Proposition \ref{lem:2.6first} holds if replacing $\TT M$ by any Courant algebroid.

\begin{proof}The proof is a minor adaptation of the first part of the proof of \cite[Lemma 2.6]{FZgeo}, setting $\varphi=0$ there. We recall briefly the idea of the latter. By \cite{DimaThesis} there is a natural description of the Courant algebroid structure on $\TT M$ in terms of graded geometry. One can use it to apply Voronov's higher derived brackets construction (see \cite{Voronov1,Voronov2}) and obtain an $L_{\infty}[1]$-algebra structure on $\Gamma(\wedge L^*)[2]$. The multibrackets obtained are the ones in the statement of the lemma, as one checks using \cite{DimaThesis} and via computations in local coordinates.
\end{proof}

\begin{proof}[Alternative proof of Proposition \ref{proposition: extended Koszul}] Let $Z$ be a bivector field on $M$. We apply Proposition~\ref{lem:2.6first} for the case $L=TM$ and $R=\mathrm{graph}(Z)$. In this case $d_L$ is the de~Rham differential, and the bracket on $R$ is given by the formula for the Koszul bracket. One checks that $\psi $ is the trivector field $-\frac{1}{2}[Z,Z]$, using \cite[Lemma~1.6]{SZPre}. Hence the $L_{\infty}[1]$-brackets on $\Omega(M)[2]$ given by Proposition~\ref{lem:2.6first} are $\mu_1 =\lambda_1$, $\mu_2 =-\lambda_2$ and $\mu_3 =\lambda_3$. Applying the automorphism $-\mathrm{id}$ to $\Omega(M)[2]$ yields Proposition~\ref{proposition: extended Koszul}.
\end{proof}

 \subsubsection[Revisiting Corollary \ref{corollary: MC for bivector fields} (Maurer--Cartan elements of {$\Omega(M)[2]$})]{Revisiting Corollary~\ref{corollary: MC for bivector fields} (Maurer--Cartan elements of $\boldsymbol{\Omega(M)[2]}$)}
We now turn to Maurer--Cartan elements. In Lemma \ref{lemma: almost Dirac structures}(i), we gave a parametrization of all almost Dirac structures that are transverse to $\mathrm{graph}(Z)$ in terms of $2$-forms $\beta$ on~$M$. This parametrization is given by
\begin{gather*}\beta \mapsto \Phi_Z(\beta)=\big\{\big(v+Z^\sharp(\iota_v\beta),\iota_v\beta\big) \, | \, v \in TM\big\}.\end{gather*}

We present the second part of \cite[Lemma~2.6]{FZgeo}, {which is} an extension of the work by Liu--Weinstein--Xu recalled in Remark~\ref{rem:LWXapp}.
\begin{Proposition}\label{proposition: MC for bivector fields}Let $L$ be given a Dirac structure and~$R$ a complementary almost Dirac structure. An element $\sigma\in \Gamma\big({\wedge}^2 L^*\big)[2]$ is a Maurer--Cartan element of the $L_{\infty}[1]$-algebra structure given in Proposition~{\rm \ref{lem:2.6first}} iff the graph \begin{gather*}\Gamma_{\sigma}:= \{ (X-\iota_X \sigma)\colon X\in L\}\subset L\oplus R\end{gather*}
 is a Dirac structure. $($The above inclusion makes use of the identification $R\cong L^*$.$)$
\end{Proposition}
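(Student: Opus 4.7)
My approach would be to extend the Liu--Weinstein--Xu theorem recalled in Remark~\ref{rem:LWXapp} to the setting where only $L$ is required to be integrable while $R$ is merely almost Dirac. First I would observe that $\Gamma_\sigma$ is automatically a Lagrangian subbundle of $\TT M$: with respect to the splitting $L\oplus R$ it is the graph of the skew map $\sigma^\sharp\colon L\to L^*\cong R$, so Remark~\ref{rem:graphdirac} applies directly. Hence the only remaining question is whether the space of sections of $\Gamma_\sigma$ is involutive under the Dorfman bracket $\oo\cdot,\cdot\cc$.

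A direct route would be to fix $X,Y\in\Gamma(L)$, expand $\oo X-\iota_X\sigma,\,Y-\iota_Y\sigma\cc$ by bilinearity into four pieces, and decompose each along $L\oplus R$. Involutivity then amounts to the condition that, for all $X,Y$, the $R$-component of the outcome equals $-\iota_Z\sigma$, where $Z\in\Gamma(L)$ is its $L$-component. A careful bookkeeping should recover the three terms of the Maurer--Cartan equation for the multibrackets of Proposition~\ref{lem:2.6first}: the linear term from $d_L$, the quadratic Koszul-type term from the mixed brackets, and a cubic term arising from $\oo\iota_X\sigma,\iota_Y\sigma\cc$ projected onto $L$.

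The cleanest route, and the one I would actually adopt, is to deduce the statement at once from Voronov's higher derived brackets construction, just as in the proof of Proposition~\ref{lem:2.6first} sketched after its statement. In the graded-geometric model of $(\TT M, \oo\cdot,\cdot\cc)$ of \cite{DimaThesis}, the Lagrangian subbundles transverse to $R$ are parametrized by $\Gamma(\wedge^2 L^*)$, and the derived-bracket $L_\infty[1]$-algebra is engineered so that its Maurer--Cartan elements are precisely those $\sigma$ for which the corresponding Lagrangian is preserved by the homological vector field that encodes the Courant bracket on $\TT M$. Preservation is exactly Dirac integrability of $\Gamma_\sigma$, giving the equivalence with no further computation.

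The main obstacle I anticipate lies in tracking the cubic term. In the classical Liu--Weinstein--Xu situation $R$ is Dirac, so the element $\psi$ of Proposition~\ref{lem:2.6first} vanishes and $\mu_3=0$; here $\psi$ measures the failure of $R$ to be involutive, and one must verify that its contribution within $\oo\iota_X\sigma,\iota_Y\sigma\cc$, projected onto $L$, reproduces exactly $\tfrac{1}{6}\mu_3(\sigma\odot\sigma\odot\sigma)$ in the Maurer--Cartan expansion. Voronov's construction packages this bookkeeping automatically, which is precisely why I would prefer it to a hands-on derivation.
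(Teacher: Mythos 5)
Your proposal is correct, and your preferred route is essentially the one the paper relies on: the paper gives no proof of this proposition at all, quoting it verbatim as the second part of Lemma~2.6 of \cite{FZgeo}, whose proof proceeds exactly via Roytenberg's graded-geometric model of $\TT M$ and Voronov's higher derived brackets, where the Maurer--Cartan equation for $\sigma$ amounts to the vanishing of the cubic Hamiltonian generating the Dorfman bracket on the Lagrangian $\Gamma_\sigma$. Your direct four-term expansion of $\oo X-\iota_X\sigma, Y-\iota_Y\sigma\cc$ would also work, and your identification of the cubic term with the contribution of $\psi$ (the failure of $R$ to be involutive, recovering Liu--Weinstein--Xu when $\psi=0$) is the right sanity check.
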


Corollary \ref{corollary: MC for bivector fields} states that for $\beta \in\Omega^2(M)$ taking values in some sufficiently small neighbor\-hood~$\mathcal{U}$ of the zero section in $\wedge^2 T^*M$ -- in particular taking values in~$\mathcal{I}_Z$, i.e., $\mathrm{id} + Z^\sharp \beta^\sharp$ is invertible,~-- $\beta$ is a Maurer--Cartan element of $(\Omega(M)[2],\lambda_1,\lambda_2,\lambda_3)$ if{f} $F(\beta)$
is closed. We now provide an alternative proof of this result, which also shows that one can choose $\mathcal{U}$ to equal $\mathcal{I}_Z$.

\begin{proof}[Alternative proof of Corollary \ref{corollary: MC for bivector fields}] For {any $\beta\in\Omega^2(M)$}, being a Maurer--Cartan element of the $L_{\infty}[1]$-algebra $(\Omega(M)[2],\lambda_1,\lambda_2,\lambda_3)$ is equivalent to $\Phi_Z(\beta)$ being a Dirac structure. This follows from applying Proposition~\ref{proposition: MC for bivector fields} to the Dirac structure $L=TM$ and to the almost Dirac structure $R=\mathrm{graph}(Z)$, noticing that $\Gamma_{-\beta}=\{(v+Z^\sharp(\iota_v\beta),\iota_v\beta)\,|\,v\in TM\}=\Phi_Z(\beta)$. When $\beta\in\Gamma(\mathcal{I}_Z)$, we know that $\Phi_Z(\beta)$ can be written as the graph of the $2$-form $F(\beta)$, by equation~\eqref{eq:phizbeta}. Now use the fact that the graph of a $2$-form is a Dirac structure if, and only if, the $2$-form is closed.
\end{proof}

\begin{Remark}In this subsection we recovered the $L_\infty[1]$-algebra {$\Omega(M)[2]$} of Proposition~\ref{proposition: extended Koszul} as the $L_\infty[1]$-algebra governing deformations of the Dirac structure $TM$ taking $\graph(Z)$ as a~complement. By Remark~\ref{Rem:twodefproblems}, this deformation problem is equivalent to the deformations of the Dirac structure $\mathrm{graph}(\eta)$ taking $G\oplus K^*$ as the complement. This explains why the $L_\infty[1]$-algebra {$\Omega(M)[2]$} governs the latter deformation problem, and therefore is relevant for the deformations of pre-symplectic structures.
\end{Remark}

\subsection{Dirac-geometric interpretation of Section \ref{subsection: Koszul for pre-symplectic}}\label{subsection: presymplintegr}

Theorem~\ref{theorem: construction - Koszul L-infty} can be deduced from a general statement about (almost) Dirac structures, however doing so amounts essentially to the same computations that were needed for the proof given in~\cite{SZPre}. We include this general statement for the sake of completeness.

\begin{Proposition}In the setting of Proposition~{\rm \ref{lem:2.6first}}, let $K$ be a subbundle of $L$ and define $\Gamma_{\hor}(\wedge L^*)$ as the kernel of the restriction map $\Gamma(\wedge L^*)\to \Gamma(\wedge K^*)$. Then the multibrackets $\mu_1$, $\mu_2$, $\mu_3$ preserve $\Gamma_{\hor}(\wedge L^*)[2]$ if{f} $K$ satisfies the following:
\begin{itemize}\itemsep=0pt
\item $K$ is a Lie subalgebroid of $L$,
\item $\langle \oo \xi_1 , \xi_2 \cc ,\, K+K^{\circ} \rangle=0$ for all $\xi_1 , \xi_2 \in \Gamma(K^{\circ})$, where we use the identification $K^{\circ}\subset L^*\cong R$ and $\oo \cdot,\cdot\cc$ denotes the Dorfman bracket.
\end{itemize}
\end{Proposition}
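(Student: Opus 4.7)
The plan is to treat the three multibrackets $\mu_1$, $\mu_2$, $\mu_3$ separately, showing in each case that $\mu_i$ preserves $\Gamma_{\hor}(\wedge L^*)[2]$ if and only if a concrete algebraic condition on $K$ holds. Assembling the three resulting conditions will give precisely the two bullets in the statement.

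For $\mu_1 = d_L$, the preservation of $\Gamma_{\hor}(\wedge L^*)$ is the classical characterization of Lie subalgebroids. Since $d_L$ is a graded derivation of $\wedge$, it suffices to check preservation on the generating $\Gamma(K^\circ)$. For $\alpha \in \Gamma(K^\circ)$ and $v_1, v_2 \in \Gamma(K)$, the identity $(d_L \alpha)(v_1, v_2) = -\alpha([v_1, v_2]_L)$ shows that $d_L \alpha \in \Gamma_{\hor}$ for all such $\alpha$ is equivalent to $[v_1, v_2]_L \in \Gamma(K)$ for all such $v_1, v_2$, hence to the first bulleted condition.

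For $\mu_2$ and $\mu_3$, I will use that $\Gamma_{\hor}(\wedge L^*)$ is, as a $C^\infty(M)$-submodule, the wedge ideal generated by $\Gamma(K^\circ)$. Since $\mu_2$ is (up to sign) the Gerstenhaber bracket $[\cdot, \cdot]_{L^*}$, the Leibniz rule reduces its preservation of $\Gamma_{\hor}$ to the generator-level condition $[\Gamma(K^\circ), \Gamma(K^\circ)]_{L^*} \subset \Gamma(K^\circ)$. Combined with $[\cdot,\cdot]_{L^*} = \mathrm{pr}_{R} \oo \cdot,\cdot \cc$ and the fact that $\langle \mathrm{pr}_{L} \oo \xi_1, \xi_2 \cc, K \rangle = 0$ automatically (since $L$ is Lagrangian and $K \subset L$), this rewrites as $\langle \oo \xi_1, \xi_2 \cc, K \rangle = 0$. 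For $\mu_3$, which is $C^\infty(M)$-trilinear and a multi-derivation via the sharp extension of \cite[Section~2.3]{FZgeo}, the same strategy reduces preservation of $\Gamma_{\hor}$ to the degree-$1$ statement that $\mu_3(\xi_1, \xi_2, \xi_3)$ vanishes for $\xi_i \in \Gamma(K^\circ)$ (since $\Gamma_{\hor}(\wedge^0 L^*) = 0$). This vanishing reads $\psi(\xi_1, \xi_2, \xi_3) = \langle \mathrm{pr}_{L} \oo \xi_1, \xi_2 \cc, \xi_3 \rangle = 0$, i.e., $\mathrm{pr}_{L} \oo \xi_1, \xi_2 \cc \in \Gamma(K)$; using that $R$ is Lagrangian (so $\langle \mathrm{pr}_{R} \oo \xi_1, \xi_2 \cc, K^\circ \rangle = 0$), this becomes $\langle \oo \xi_1, \xi_2 \cc, K^\circ \rangle = 0$. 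Together the $\mu_2$ and $\mu_3$ conditions yield the second bullet.

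The main obstacle will be the reduction-to-generators step for $\mu_3$: one must verify that the multi-sharp operation $\alpha_1^\sharp \wedge \cdots \wedge \alpha_n^\sharp$ of \cite{FZgeo} satisfies an appropriate multi-derivation identity in each slot, so that $\mu_3(\Gamma_{\hor} \odot \Gamma_{\hor} \odot \Gamma_{\hor}) \subset \Gamma_{\hor}$ really does follow from checking $\mu_3$ on triples of degree-$1$ elements of $\Gamma(K^\circ)$. Once this is granted, the remaining steps are elementary manipulations using the transversality and Lagrangian property of $L$ and $R$, together with $(K^\circ)^\circ = K$ inside $L$.
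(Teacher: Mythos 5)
Your proposal is correct and follows essentially the same route as the paper's own proof: reduce each of $\mu_1$, $\mu_2$, $\mu_3$ to generators of the ideal $\Gamma_{\hor}(\wedge L^*)$ via the derivation property, obtaining involutivity of $K$ from $\mu_1$, $\langle \oo \xi_1,\xi_2\cc, K\rangle=0$ from $\mu_2$, and $\langle \oo \xi_1,\xi_2\cc, K^{\circ}\rangle=0$ from $\mu_3$. Your added justifications (using that $L$ and $R$ are Lagrangian to identify the projected brackets with the full Dorfman pairing, and that $\Gamma_{\hor}(\wedge^0 L^*)=0$ forces the vanishing of $\mu_3$ on triples of generators) are correct and merely make explicit what the paper leaves implicit.
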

\begin{proof}We will use the fact that $\mu_1$, $\mu_2$, $\mu_3$ are derivations w.r.t.\ the wedge product in each entry. The Lie algebroid differential $d_L$ preserves $\Gamma_{\hor}(\wedge L^*) $ if{f} the subbundle $K$ is involutive. The bracket $[\cdot,\cdot]_{L^*}$ preserves $\Gamma_{\hor}(\wedge L^*) $ if{f} $\langle \oo \xi_1 , \xi_2 \cc, K\rangle=0$ for all $\xi_1 , \xi_2 \in \Gamma(K^{\circ})$. The trinary bracket $\mu_3$ preserves $\Gamma_{\hor}(\wedge L^*) $ if{f} $\mu_3(\xi_1,\xi_2,\xi_3)=0$ for all $\xi_i \in \Gamma(K^{\circ})$, which in turn is equivalent to $\langle \oo \xi_1 , \xi_2 \cc, \xi_3 \rangle=0$.
\end{proof}

Finally, as mentioned earlier, Theorem~\ref{theorem: main result} follows immediately from the other results presented.

\subsection*{Acknowledgements}We thank Stephane Geudens for comments on a draft of this note. M.Z.~acknowledges partial support by IAP Dygest, the long term structural funding -- Methusalem grant of the Flemish Government,
the FWO under EOS project G0H4518N, the FWO research project G083118N (Belgium).

\pdfbookmark[1]{References}{ref}
\LastPageEnding

\end{document}